\theoremstyle{plain}
\newtheorem{Thm}[subsection]{Theorem}
\newtheorem{Cor}[subsection]{Corollary}
\newtheorem{Prop}[subsection]{Proposition}
\newtheorem{Lem}[subsection]{Lemma}
\newtheorem{KLem}[subsection]{Key Lemma}
\theoremstyle{definition}
\newtheorem{Ex}[subsection]{Example}
\newtheorem{Prob}[subsection]{Problem}
\renewcommand{\phi}{\varphi}
\newcommand{\RR}{\mathbb{R}}
\newcommand{\id}{\mathrm{id}}
\newcommand{\eps}{\varepsilon}
\renewcommand{\emptyset}{\varnothing}
\renewcommand{\setminus}{-}
\begin{document}

\title{{On small abstract quotients of Lie groups and locally compact groups}}
\author{Linus Kramer}
\address{Linus Kramer\newline\indent
Mathematisches Institut, Universit\"at M\"unster
\newline\indent
Einsteinstr. 62, 
48149 M\"unster,
Germany}
\email{linus.kramer{@}uni-muenster.de}
\thanks{Supported by SFB 878}
\dedicatory{Dem Andenken von G\"unter Pickert gewidmet.}
\subjclass{51H, 22D}
\maketitle

\section{Introduction}

Suppose that $G$ is a locally compact group, that $\Gamma$ is a discrete,
finitely generated group, and that 
\[
 \phi:G \longrightarrow \Gamma
\]
is an 'abstract' surjective homomorphism. We are interested in conditions
which imply that $\phi$ is automatically continuous. We obtain
a complete answer to this question in the case where $G$ is a topologically finitely generated
locally compact abelian group
or an almost connected Lie group. In these two cases the well-known structure theory
for such groups $G$ leads quickly to a solution. The question becomes much more
difficult if one assumes only that $G$ is a locally compact group.
This leads to interesting questions about normal subgroups in infinite products
and in ultraproducts. \L os' Theorem, the solution of the 5th
Hilbert Problem, and recent results by Nikolov--Segal can be combined to answer 
the question.

\smallskip
The topic of this article is thus of a rather group-theoretic nature. However,
locally compact transformation groups are ubiquitous in geometry.
Conversely, several proofs in this article rely on geometry, at different levels of
abstraction. The 
'bootstrap Lemma'~\ref{ElementaryLemma}
is based on a little exercise in spherical geometry, which
was still taught at German high schools in G\"unter Pickert's early years.
But we also use some observations about euclidean and spherical buildings.

\smallskip
The article is organized as follows. In Section~2 we consider small abstract quotients of
locally compact abelian groups. In Section~3 we do the same for Lie groups.
Section~4 then extends these results to compact and to locally compact groups.
In Section 5~we make some observations about normal subgroups in infinite products.
This leads ultimately to questions about normal subgroups in ultraproducts.
We adopt a sheaf-theoretic viewpoint which is elementary but useful.
Section 6~gives a detailed proof for the nonexistence of countable
quotients of compact connected perfect groups, following Nikolov--Segal.

The necessary background material on Lie groups and locally compact groups can be found in the 
excellent books by Hilgert--Neeb \cite{HilgertNeeb}, Stroppel \cite{Stroppel}, and Hofmann--Morris \cite{HMCompact}, \cite{HMPro}.
As the referee pointed out, it is likely that many of our results can be extended to Pro-Lie groups, using the structure 
theory developed in \cite{HMPro}. At some places, this is straight-forward, as in Corollary \ref{ProLieAbelian},
or in Lemma \ref{FiniteIndexLieLemma}, which holds verbatim for Pro-Lie groups.
The generalization of other results in this article would seem to require new  ideas.

\subsection{Notation and conventions} 
The cardinality of a set $S$ is written as $\#S$.
We call a set $S$ is \emph{countable} if $\#S\leq\aleph_0$.
Throughout this article, all topological groups and spaces are assumed to be Hausdorff,
unless stated otherwise.
The identity component in a topological group $G$ is denoted by $G^\circ$.
This is a closed normal subgroup of $G$. We say that a topological group $G$ is \emph{topologically
finitely generated} if $G$ has a finitely generated dense subgroup.

We call a quotient $G/N$ of a topological group $G$ \emph{abstract} if no topological assumptions 
like closedness of the subgroup $N$ in $G$ are made. Similarly, we call a homomorphism between topological groups abstract if
no continuity assumptions are made.

We frequently use the fact that a subgroup of finite index in a finitely
generated group is also finitely generated. It suffices to prove this for finitely generated
free groups, where it follows from the Nielsen--Schreier Theorem~\cite[6.1.1]{Rob}.

A group $\Gamma$ is called \emph{residually finite} if the intersection of all
normal subgroups of finite index in $\Gamma$ is trivial.
Equivalently, $\Gamma$ embeds in a product of finite groups.
Every finitely generated abelian group is residually finite, and subgroups of residually finite
groups are again residually finite.

A group $G$ is \emph{virtually 'X'} (where 'X' is a group-theoretic property) if 
$G$ has a finite index subgroup which has property 'X'. 
\footnote{Finite group theory is therefore concerned with virtually trivial groups---this
indicates the drawbacks of unrestricted virtualization.}

A group element $g$ is called \emph{divisible}
if the equation $x^n=g$ has a solution in the given group for every integer $n\geq 1$.
A group is called divisible if it consists of divisible elements, and 
\emph{divisibly generated} if it is generated by a set of divisible elements.
Divisibility will play an important role in some proofs.
The following group-theoretic facts illustrate some of the consequences
and non-consequences of divisibility.
\subsection{Some facts about divisibility in groups}\label{FirstFacts}\ 

{\em
\begin{enumerate}[(i)]
\item Homomorphisms preserve divisibility.
\item
A group of finite exponent (e.g. a finite group) contains no divisible elements besides the identity.
Hence a residually finite group contains no divisible elements besides the identity. In particular,
a finitely generated abelian group contains no divisible elements besides the identity.
\item 
There exist finitely generated divisible groups. Moreover, every countable
group (e.g.~the additive group $\mathbb Q$) is isomorphic to a subgroup of some $2$-generated group.
\item
A connected locally compact group is divisibly generated.
\end{enumerate}}
\begin{proof}
Claim (i) is obvious. For (ii) we note that if a group $H$ has finite exponent $m>1$ and if $h\in H$ is
different from the identity, then the  equation $x^m=h$ cannot be solved in~$H$. 
The claims (iii) and (iv) are much deeper.
For (iii), see \cite{Guba} and \cite[Thm.~IV]{HNN} or \cite[6.4.7]{Rob}. Claim (iv) 
holds for all connected Pro-Lie groups, see \cite[Lem.~9.52]{HMPro}.
Every locally compact groups is a Pro-Lie group by the
Approximation Theorem due to Gleason, Montgomery--Zippin, and Yamabe \cite[p.~175]{MoZi}.
\end{proof}
When we are concerned with continuity questions, we view every countable group $\Gamma$ as a discrete
locally compact group. This is justified by the following elementary fact.
\begin{Lem}\label{CountableIsDiscrete}
A locally compact countable group is discrete.
\end{Lem}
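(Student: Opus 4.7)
The plan is to use the Baire category theorem together with the homogeneity of topological groups. Let $G$ be a locally compact Hausdorff countable group. Since $G$ is locally compact Hausdorff, it is a Baire space: no nonempty open subset of $G$ is a countable union of nowhere dense closed sets.

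Write $G = \bigcup_{g \in G} \{g\}$, which is a countable union since $G$ is countable. Because $G$ is Hausdorff, every singleton $\{g\}$ is closed. By the Baire property applied to the whole space $G$, at least one singleton $\{g_0\}$ must fail to be nowhere dense; since it is already closed, this forces $\{g_0\}$ to have nonempty interior, and the only way a singleton can have nonempty interior is that it is itself open. Hence $\{g_0\}$ is open in $G$.

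Now I would invoke homogeneity: for any $g \in G$, left translation $L_{gg_0^{-1}}\colon G \to G$ is a homeomorphism mapping $g_0$ to $g$, so $\{g\} = L_{gg_0^{-1}}(\{g_0\})$ is open. In particular $\{e\}$ is open, so $G$ is discrete.

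The argument is entirely routine; the only point that requires a little care is the invocation of the Baire category theorem in the locally compact Hausdorff (rather than complete metric) setting, but this is a standard fact. No genuine obstacle arises.
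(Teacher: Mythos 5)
Your proof is correct and is essentially the same as the paper's: both invoke the Baire category theorem to show some singleton is open, and then use homogeneity (translation) to conclude every singleton is open. You have merely spelled out the details that the paper compresses into a single sentence.
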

\begin{proof}
By the Baire Category Theorem, some and hence every singleton $\{g\}\subseteq G$ is open,
see \cite[XI.10]{Dugundji}.
\end{proof}

\section{The case where $G$ is locally compact abelian}
For locally compact abelian groups we have the following general result.
Note that a finitely generated abelian group is residually finite.
\begin{Thm}
\label{Abelian}
Suppose that $G$ is a locally compact abelian group, that $G/G^\circ$ is 
topologically finitely generated, and 
that $\Gamma$ is a countable residually finite group. Suppose that
\[
     \phi:G\longrightarrow\Gamma
\]
is an abstract surjective group homomorphism. Then $\phi$ is continuous and open.
\end{Thm}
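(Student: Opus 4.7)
The plan is to reduce $G$ to a product $V\times\ZZ^r$ with $V$ compact profinite abelian, and then to force the image $\phi(V)$ to be finite by a Baire category argument. First I would kill the identity component: by item~(iv) of \S\ref{FirstFacts} the connected group $G^\circ$ is divisibly generated, and since $G^\circ$ is abelian this means $G^\circ$ is itself divisible, so by (i) and (ii) its image in the residually finite group $\Gamma$ is trivial. Thus $\phi$ descends to $\bar\phi:G/G^\circ\to\Gamma$, and I may assume $G$ is locally compact, totally disconnected, topologically finitely generated, and abelian. Such a $G$ has a compact open subgroup $V$, and the discrete quotient $G/V$ is finitely generated abelian; after enlarging $V$ to absorb the torsion part, $G/V\cong\ZZ^r$. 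Freeness of $\ZZ^r$ splits the sequence $1\to V\to G\to\ZZ^r\to 1$, and any algebraic section is automatically continuous since $\ZZ^r$ is discrete, giving $G\cong V\times\ZZ^r$ as topological groups; a brief examination of topological generators of $G$ then shows that $V$ itself is topologically finitely generated.

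The central step is to show $\phi(V)$ is finite. Since $V$ is topologically $k$-generated profinite abelian, it is a continuous quotient of $\hat\ZZ^k$; hence every abstract subgroup $N\leq V$ of finite index $n$ contains $nV$, which is open because $V/nV$ is a quotient of the finite group $(\ZZ/n)^k$. Consequently every abstract homomorphism from $V$ to a finite group is continuous. The image $\phi(V)\subseteq\Gamma$ is residually finite, so it embeds into the product $\prod_\alpha F_\alpha$ of its finite quotients; the induced map $V\to\prod_\alpha F_\alpha$ is continuous coordinate by coordinate, so $\phi(V)$ is compact in the product topology. But $\phi(V)$ is also countable (as a subset of $\Gamma$), and a countable compact Hausdorff group is finite by the Baire Category Theorem.

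Finiteness of $\phi(V)$ implies $\ker(\phi|_V)$ has finite index in $V$ and is therefore open, so $\phi|_V$ is continuous; together with the automatic continuity of $\phi|_{\ZZ^r}$, this yields continuity of $\phi$ on $G\cong V\times\ZZ^r$. Openness follows from the open mapping theorem applied to the $\sigma$-compact group $G$. The main obstacle is the central step: the image $\phi(V)$ is a priori just an abstract group, and the crux is to use the topologically-finitely-generated hypothesis together with Baire category to force it to be finite.
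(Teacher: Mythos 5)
Your proof is correct and follows essentially the same route as the paper's: kill $G^\circ$ via divisibility, isolate a compact open subgroup of the remaining totally disconnected group, show that abstract finite-index subgroups of that compact group are automatically open (the paper via $G^m$ being open, you via $nV$ being open --- the identical observation), and then invoke Baire Category against residual finiteness and countability. The only differences are cosmetic: you build the decomposition $V\times\ZZ^r$ by hand rather than citing the structure theorem $G\cong\RR^n\times H$, and you run the Baire argument on the image side (a compact countable Hausdorff group is finite) instead of on the kernel side (a closed kernel gives a countable locally compact, hence discrete, quotient); these are dual formulations of the same idea.
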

\begin{proof}
We proceed in four steps. The steps 1) and 2) below follow loosely the arguments in 
\cite[Thm.~5.1]{NS}.

\smallskip\noindent\emph{0) We may assume that $G^\circ=\{1\}$ and thus that $G$ is topologically finitely generated.}

\noindent
Since $G^\circ$ is divisible and since a residually finite group
contains no nontrivial divisible elements (see~\ref{FirstFacts}), $\phi$ is constant
on $G^\circ$. Hence $\phi$ factors through the continuous open homomorphism $G\longrightarrow G/G^\circ$.

\smallskip\noindent\emph{1) The claim is true if $G$ is compact and if $\Gamma$ is finite.}

\noindent
Let $F\subseteq G$ be a finitely generated dense subgroup and
let $m$ denote the exponent of $\Gamma$. 
Then $G^m=\{g^m\mid g\in G\}\subseteq G$ is a compact subgroup and the compact abelian group $G/G^m$ has finite exponent.
The image of $F$ in $G/G^m$ is finitely generated abelian of finite exponent and
therefore finite. On the other hand, $F$ has dense image in $G/G^m$. Therefore
$G/G^m$ is finite and thus $G^m$ is open in $G$. Now $G^m$ is contained in the
kernel of $\phi$, hence $\mathrm{ker}(\phi)$ is both open and closed. Therefore $\phi$ is continuous
and open.

\smallskip\noindent\emph{2) The claim is true if $G$ is compact.}

\noindent
Let $\mathcal N$ denote the collection of all normal subgroups of finite index in $\Gamma$.
For $\Delta\in\mathcal N$ let $\phi_\Delta$ denote the composite
$G\longrightarrow\Gamma\longrightarrow \Gamma/\Delta$.
By Step 1), $\mathrm{ker}(\phi_\Delta)$ is closed.
Hence $\mathrm{ker}(\phi)=\bigcap\{\mathrm{ker}(\phi_\Delta)\mid\Delta\in\mathcal N\}$
is closed.
Therefore $G/\mathrm{ker}(\phi)$ is a countable locally compact group. 
By Lemma~\ref{CountableIsDiscrete}, $G/\mathrm{ker}(\phi)$ is discrete and therefore
$\mathrm{ker}(\phi)$ is open and $\phi$ is continuous and open.

\smallskip\noindent\emph{3) The claim is true in general.}

\noindent
We may decompose the locally compact abelian group $G$ 
topologically as $G=\RR^n\times H$, where $H$ has a compact open subgroup $K$,
see \cite[Thm.~7.57~(i)]{HMCompact}. 
However, $G^\circ=\{1\}$ by Step 0), whence $G=H$ has a compact open subgroup $K$.
Let $\Gamma_0=\phi(K)$. Then $\Gamma_0\subseteq\Gamma$ is residually finite and countable.
The restriction $\phi:K\longrightarrow\Gamma_0$ is continuous and open by 
Step 2). Since $K\subseteq H$ is open, 
$\phi:H\longrightarrow\Gamma$ is continuous and open.
\end{proof}

The following examples show that the assumptions on $G$ and $\Gamma$ cannot be dropped.
\begin{Ex}
As an abstract group, the $1$-torus $\mathrm U(1)\cong\mathbb R/\mathbb Z$ is isomorphic to a product of Pr\"ufer groups and
a rational vector space of uncountable dimension, see e.g.~\cite[A1.43]{HMCompact}. In particular, there exist (many) 
non-continuous
surjective homomorphisms $\mathrm U(1)\longrightarrow\mathbb Q$. Hence the condition that $\Gamma$ is 
residually finite cannot be dropped. 

The compact group $G=\prod_{i=0}^\infty\mathbb F_2$ admits many non-continuous surjective homomorphisms
onto $\mathbb F_2$ (where $\mathbb F_2$ denotes the field of two elements).
Hence the condition that $G/G^\circ$ is topologically finitely generated can also not be dropped.
\end{Ex}
On the other hand, it is easy to extend the previous result to Pro-Lie groups.
\begin{Cor}
\label{ProLieAbelian}
Suppose that $G$ is an abelian Pro-Lie group, that $G/G^\circ$ is 
locally compact and topologically finitely generated, and 
that $\Gamma$ is a countable residually finite group. Suppose that
\[
     \phi:G\longrightarrow\Gamma
\]
is an abstract surjective group homomorphism. Then $\phi$ is continuous and open.
\end{Cor}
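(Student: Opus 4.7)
The plan is to mimic step 0) of the proof of Theorem~\ref{Abelian}, killing the identity component, and then to reduce to the locally compact abelian case already handled. The extra ingredient we need is that in a Pro-Lie group the identity component is still \emph{divisibly generated}, which is exactly the content of \ref{FirstFacts}(iv).

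In detail, I would proceed as follows. First, observe that $G^\circ$ is closed in $G$, and being a closed subgroup of a Pro-Lie group it is itself a connected (abelian) Pro-Lie group. By \ref{FirstFacts}(iv), $G^\circ$ is generated by its set $D\subseteq G^\circ$ of divisible elements. By \ref{FirstFacts}(i), $\phi(D)$ consists of divisible elements of $\Gamma$, and by \ref{FirstFacts}(ii) the only divisible element of the residually finite group $\Gamma$ is the identity. Hence $\phi(D)=\{1\}$, and since $D$ generates $G^\circ$ we conclude that $\phi$ is trivial on $G^\circ$. Therefore $\phi$ factors as
\[
\phi:G\xrightarrow{\ q\ }G/G^\circ\xrightarrow{\ \bar\phi\ }\Gamma,
\]
where $q$ is the continuous open quotient homomorphism.

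Next, I would apply Theorem~\ref{Abelian} to the induced map $\bar\phi$. By assumption $\bar G:=G/G^\circ$ is locally compact, abelian, and topologically finitely generated. Because the identity component of any topological group is closed and the quotient by it is totally disconnected, we have $\bar G^\circ=\{1\}$, so in particular $\bar G/\bar G^\circ=\bar G$ is topologically finitely generated. Thus the hypotheses of Theorem~\ref{Abelian} are satisfied for $\bar\phi:\bar G\longrightarrow\Gamma$, and we conclude that $\bar\phi$ is continuous and open. Composing with the continuous open map $q$ yields that $\phi=\bar\phi\circ q$ is continuous and open, completing the proof.

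The only real subtlety is in the very first step: one must justify that \ref{FirstFacts}(iv) really applies to $G^\circ$ (i.e.\ that $G^\circ$ is itself a connected Pro-Lie group and not merely a connected topological group). This is a standard structural fact from \cite{HMPro} that closed subgroups of Pro-Lie groups are Pro-Lie, so I would simply cite it. Everything else is a direct transcription of step~0) of Theorem~\ref{Abelian}, with Fact~(iv) replacing the use of divisibility of $G^\circ$ in the locally compact setting.
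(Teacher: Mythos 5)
Your proof is correct and follows essentially the same route as the paper: cite that $G^\circ$ is a connected Pro-Lie group, use Lemma 9.52 of \cite{HMPro} (via \ref{FirstFacts}(iv)) to see it is divisibly generated, conclude $\phi$ kills $G^\circ$, and then apply Theorem~\ref{Abelian} to the induced map on $G/G^\circ$. The only difference is cosmetic — you spell out the divisibility argument and the check that $\bar G/\bar G^\circ=\bar G$ in more detail than the paper, which simply cites divisibility of $G^\circ$ and invokes Theorem~\ref{Abelian} directly.
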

\begin{proof}
The identity component $G^\circ$ is a connected Pro-Lie group \cite[3.29]{HMPro},
and therefore divisible \cite[Lem.~9.52]{HMPro}. Hence $\phi$ factors through the open
homomorphism $G\longrightarrow G/G^\circ$. The claim follows now from
Theorem \ref{Abelian}.
\end{proof}

\section{The case where $G$ is an almost connected Lie group}
Now we consider an abstract surjective group homomorphism
\[
 G\longrightarrow\Gamma,
\]
where $G$ is an almost connected\footnote{i.e.~$G/G^\circ$ is finite}
Lie group and $\Gamma$ is a finitely
generated group. 
The next result 
was observed (in different degrees of generality) by Goto \cite{Goto},
Ragozin \cite{Ragozin} and George Michael \cite{Michael}. See also
\cite[Thm.~9.90]{HMCompact} and \cite[94.21]{CPP}.
For the sake of completeness, we include a proof.
\begin{Thm}\label{NormalInSemisimple}
Let $G$ be a Lie group. If the Lie algebra $\mathrm{Lie}(G)$ is semisimple, then
every abstract normal subgroup $N\trianglelefteq G$ is closed.
\end{Thm}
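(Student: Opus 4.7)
My plan is to reduce the statement to the case where $G$ is connected, and then to prove $N = \overline N$ by exhibiting an open neighbourhood of the identity of $\overline N^\circ$ inside $N$ via a commutator-plus-inverse-function-theorem trick powered by semisimplicity.

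First I reduce to $G$ connected. Since Lie groups are locally connected, $G^\circ$ and every coset $gG^\circ$ are clopen in $G$. So if $M := N \cap G^\circ$ is closed in $G^\circ$, then for each $g \in N$ the set $N \cap gG^\circ = gM$ is closed in $gG^\circ$, hence $N$ is closed in $G$. Because $M \trianglelefteq G^\circ$ and $\mathrm{Lie}(G^\circ) = \mathrm{Lie}(G)$ is semisimple, it suffices to treat the connected semisimple case. Assuming this, set $\mathfrak g := \mathrm{Lie}(G)$, let $\overline N$ be the closure of $N$, and put $H := \overline N^\circ$. Then $\overline N$ is a closed normal Lie subgroup of $G$ whose Lie algebra is an ideal $\mathfrak h \trianglelefteq \mathfrak g$, and ideals of a semisimple Lie algebra are themselves semisimple. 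Thus $H$ is a connected closed normal subgroup of $G$ with $\mathrm{Lie}(H) = \mathfrak h$. Since $H$ is open in $\overline N$ (Lie groups being locally connected) and $N$ is dense in $\overline N$, the intersection $N \cap H$ is dense in $H$. The target becomes $H \subseteq N$: once this is known, $\overline N / H$ is discrete and the dense image of $N$ in it fills it out, giving $N = \overline N$. Because $N \cap H$ is a subgroup of the connected group $H$, it in turn suffices to prove that $N \cap H$ contains a neighbourhood of $e$.

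The crux is this: for any $a \in N$ and any $x \in \mathfrak g$, normality yields $[\exp(x),a] = (\exp(x)\,a\,\exp(-x)) \cdot a^{-1} \in N$, while Baker--Campbell--Hausdorff gives, for small $x$, $[\exp(x),a] = \exp\bigl((I - \mathrm{Ad}(a))\,x + O(\|x\|^2)\bigr)$. Semisimplicity of $\mathfrak h$ forces the $\mathrm{Ad}(H)$-stable subspace $V := \mathrm{span}\{(I - \mathrm{Ad}(a))\,x : a \in H,\ x \in \mathfrak h\}$ to be an ideal of $\mathfrak h$ containing $[\mathfrak h, \mathfrak h] = \mathfrak h$, whence $V = \mathfrak h$. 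Consequently one can choose finitely many $a_1,\dots,a_k \in H$ for which $(x_1,\dots,x_k) \mapsto \sum_i (I - \mathrm{Ad}(a_i))\,x_i$ is surjective $\mathfrak h^k \to \mathfrak h$. Since surjectivity is an open condition in the $a_i$ and $N \cap H$ is dense in $H$, the $a_i$ can in fact be chosen from $N \cap H$. Then the smooth map $\Phi \colon \mathfrak h^k \to H$, $(x_i) \mapsto \prod_i [\exp(x_i), a_i]$, has image inside $N \cap H$ and is a submersion at $0$, so by the submersion theorem $\Phi(\mathfrak h^k)$ contains a neighbourhood of $e$ in $H$.

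The principal obstacle is the density--openness dovetailing in the last step: transferring the abstract identity $V = \mathfrak h$ to witnesses $a_i$ lying in the dense subgroup $N \cap H$ requires the openness of the surjectivity condition for parametrised linear maps, and the submersion theorem must then be applied so that the resulting open set automatically stays inside $N$, which works because each factor of $\Phi$ is a commutator involving an element of $N$.
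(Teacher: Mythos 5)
Your proof is correct and takes a genuinely different route from the paper's. The paper argues by induction on $\dim G$: using Yamabe's theorem, the path component $N_1$ of the identity in $N$ (with the subspace topology) is an analytic subgroup whose Lie algebra is an ideal of $\mathfrak g$; in the semisimple case such analytic subgroups are closed (as connected centralisers of the complementary ideal), so one passes to $G/N_1$ and descends. Your argument avoids both Yamabe's theorem and the induction: setting $H=(\overline N)^\circ$, you manufacture an identity neighbourhood of $H$ inside $N$ from the commutators $[\exp(x),a]$ with $a\in N\cap H$, using that $\{(I-\mathrm{Ad}(a))x : a\in H,\ x\in\mathfrak h\}$ spans $\mathfrak h$ (this is where semisimplicity enters, via $[\mathfrak h,\mathfrak h]=\mathfrak h$), that $N\cap H$ is dense in $H$ while the full-rank condition is open in $(a_1,\dots,a_k)$ so the witnesses can be taken in $N$, and finally the submersion theorem. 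This is self-contained and has the flavour of the Nikolov--Segal commutator-generation techniques the paper develops much more heavily in Section~6, whereas the paper's own proof of this statement is classical Lie theory. One clause worth expanding: the inclusion $[\mathfrak h,\mathfrak h]\subseteq V$ follows by differentiating $t\mapsto (I-\mathrm{Ad}(\exp(ty)))x$ at $t=0$ and noting that $V$ is a finite-dimensional (hence closed) linear subspace; the \emph{ideal} property of $V$ is not actually needed once this is done.
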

\begin{proof}
Let $N\trianglelefteq G$ be a normal subgroup.
Since $G^\circ\subseteq G$ is closed and open, it suffices to show that $N\cap G^\circ$ is closed
in $G^\circ$ in order to show that $N$ is closed. 
Hence we may assume that $G$ is connected, and we proceed by induction on the dimension of 
the compact connected semisimple Lie group $G$. 
The claim holds for trivial reasons if $\dim(G)=0$. 
For $\dim(G)>0$ we view $N\subseteq G$ as a topological group with respect to the subspace topology.
Let $N_1\subseteq N$ denote the path component of the identity.

If $N_1=\{1\}$, then the path component of every element $n\in N$ is trivial.
Then $\{gng^{-1}\mid g\in G\}=\{n\}$ holds for every
$n\in N$. Thus $N$ is contained in the closed discrete group
$\mathrm{Cen}(G)\subseteq G$ and therefore $N$ is closed.

If $N_1$ is nontrivial then there exists, by Yamabe's Theorem
(see~\cite{Goto} or \cite[Thm.~9.6.1]{HilgertNeeb}),
a Lie group structure on $N_1$ such that
$N_1\hookrightarrow G$ is a continuous injection.
In other words, $N_1\hookrightarrow G$ is a nontrivial connected analytic/virtual/integral Lie subgroup of $G$.
Its Lie algebra $\mathrm{Lie}(N_1)$ is then a nontrivial ideal in 
$\mathrm{Lie}(G)$. Since $\mathrm{Lie}(G)$ is
semisimple, the virtual subgroup corresponding to any ideal in $\mathrm{Lie}(G)$ is closed: it is
the connected centralizer of the complementary ideal.
Therefore $N_1\subseteq G$ is closed and thus a closed Lie subgroup of $G$. 
Now we may apply the induction hypothesis to 
$N/N_1\subseteq G/N_1$. The group $N$ is the preimage of $N/N_1$ under the continuous homomorphism
$G\longrightarrow G/N_1$ and therefore closed in $G$.
\end{proof}
\begin{Cor}
If $G$ is a connected Lie group whose Lie algebra is simple, then every proper abstract normal subgroup
of $G$ is contained in the center of $G$. In particular, $G/\mathrm{Cen}(G)$ is simple as an
abstract group.
\qed
\end{Cor}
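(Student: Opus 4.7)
The plan is to reduce the corollary to Theorem \ref{NormalInSemisimple}, then use the fact that $\mathrm{Lie}(G)$ is simple (hence, in particular, semisimple) to restrict the possibilities for the Lie algebra of a proper normal subgroup.

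First I would take a proper abstract normal subgroup $N\trianglelefteq G$. Since a simple Lie algebra is semisimple, Theorem \ref{NormalInSemisimple} applies and shows that $N$ is closed in $G$. Therefore $N$ is a closed (and hence Lie) subgroup of $G$, and its Lie algebra $\mathrm{Lie}(N)$ is an ideal of the simple Lie algebra $\mathrm{Lie}(G)$. Simplicity forces $\mathrm{Lie}(N)=0$ or $\mathrm{Lie}(N)=\mathrm{Lie}(G)$. In the second case, the identity component $N^\circ$ would be an open, and thus closed and open, subgroup of the connected group $G$, giving $N^\circ=G$ and so $N=G$, which contradicts the properness of $N$. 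Hence $\mathrm{Lie}(N)=0$, i.e.\ $N$ is discrete.

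Next I would invoke the standard argument that a discrete normal subgroup of a connected topological group is central: for fixed $n\in N$, the continuous conjugation map $g\mapsto gng^{-1}$ takes values in the discrete space $N$ and sends $1$ to $n$, so by connectedness of $G$ it is constantly equal to $n$. This shows $N\subseteq\mathrm{Cen}(G)$, which is the first assertion.

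For the ``in particular'' statement, I would use the correspondence between normal subgroups of $G/\mathrm{Cen}(G)$ and normal subgroups of $G$ that contain $\mathrm{Cen}(G)$. Any proper normal subgroup of $G/\mathrm{Cen}(G)$ lifts to a proper normal subgroup $N\trianglelefteq G$ with $\mathrm{Cen}(G)\subseteq N$; by the first part $N\subseteq\mathrm{Cen}(G)$, so $N=\mathrm{Cen}(G)$ and the quotient is trivial. To conclude that $G/\mathrm{Cen}(G)$ is genuinely simple and not just has no proper nontrivial normal subgroups, I would note that $\mathrm{Cen}(G)$ is discrete (its Lie algebra is the center of the simple Lie algebra $\mathrm{Lie}(G)$, which is zero under the standard convention that simple Lie algebras are nonabelian), so $G/\mathrm{Cen}(G)$ has the same positive dimension as $G$ and is nontrivial. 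There is no real obstacle here; the whole argument is an immediate exploitation of the preceding theorem together with the Lie-algebraic dichotomy that simplicity provides.
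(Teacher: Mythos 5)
Your proof is correct and is exactly the argument the paper intends by marking the corollary with an immediate \qed: apply Theorem~\ref{NormalInSemisimple} to see that a proper abstract normal subgroup $N$ is closed, note that $\mathrm{Lie}(N)$ is then an ideal in the simple algebra $\mathrm{Lie}(G)$ and must vanish (the alternative $\mathrm{Lie}(N)=\mathrm{Lie}(G)$ would force $N=G$ by connectedness), so $N$ is discrete and hence central by the standard connectedness argument, with the second assertion following from the lattice correspondence. Your aside on nontriviality -- using that the center is discrete because the center of a simple Lie algebra is zero -- is a correct and worthwhile point, since it is what distinguishes ``simple'' from merely ``every normal subgroup is trivial or everything''.
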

\begin{Cor}\label{NoCountabeIndexInSemisimple}
A connected semisimple Lie group has no proper abstract normal subgroups of countable index.
\qed
\end{Cor}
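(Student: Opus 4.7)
The plan is to combine Theorem \ref{NormalInSemisimple} with Lemma \ref{CountableIsDiscrete} and a connectedness argument. Suppose $G$ is a connected semisimple Lie group and $N\trianglelefteq G$ is an abstract normal subgroup with $\#(G/N)\leq\aleph_0$. Since $\mathrm{Lie}(G)$ is semisimple, Theorem~\ref{NormalInSemisimple} immediately gives that $N$ is closed in $G$.

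Next, I would observe that the quotient $G/N$ is a Hausdorff topological group, and in fact a Lie group (as a quotient of a Lie group by a closed normal subgroup); in particular it is locally compact. By hypothesis $G/N$ is countable, so Lemma~\ref{CountableIsDiscrete} forces $G/N$ to be discrete.

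Finally, since $G$ is connected, its continuous image $G/N$ under the quotient map is connected. A connected discrete group is trivial, so $G/N=\{1\}$, i.e.\ $N=G$. Hence there is no \emph{proper} normal subgroup of countable index.

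There is no real obstacle here; the argument is a three-line concatenation of the results already proved. The only minor point worth flagging is that the theorem is stated for \emph{abstract} normal subgroups, but Theorem~\ref{NormalInSemisimple} already eliminates this subtlety by producing closedness from pure normality. Everything after that is standard Lie-theoretic bookkeeping.
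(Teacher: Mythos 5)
Your argument is correct and is exactly the implicit one the paper has in mind (the corollary is stated with \qed and no written proof): closedness of $N$ from Theorem~\ref{NormalInSemisimple}, countability plus local compactness forcing $G/N$ discrete via Lemma~\ref{CountableIsDiscrete}, and connectedness of $G$ forcing the discrete quotient to be trivial.
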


Now we come to the main result of this section. A variation is proved in Proposition~\ref{LieGroupCountableIndexNormal}.
\begin{Thm}\label{QuotientsOfLieGroups}
Let $G$ be a Lie group, with $G/G^\circ$ finite. Let $\Gamma$ be a finitely generated group
and let 
\[
\phi:G\longrightarrow\Gamma
\]
be an abstract surjective homomorphism. Then $\Gamma$ is finite and $\phi$ is continuous and open.
\end{Thm}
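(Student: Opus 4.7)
The plan is to establish the stronger claim that $\phi$ annihilates $G^\circ$. Once this is known, $\ker(\phi)$ contains the open subgroup $G^\circ$ and is therefore open, so $\phi$ factors continuously through the finite group $G/G^\circ$, making $\Gamma$ finite and giving continuity and openness at once. The first move is to replace $G$ by $G^\circ$ and $\Gamma$ by $\phi(G^\circ)$; since $\phi(G^\circ)$ has finite index in $\Gamma$ it is still finitely generated by Nielsen--Schreier, so one may henceforth assume that $G$ is a connected Lie group and aim to show $\Gamma=\{1\}$.

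The strategy is then to squeeze $\Gamma$ from two opposite ends. First I would argue that $\Gamma$ is \emph{perfect}. By Facts~\ref{FirstFacts}(iv) the connected Lie group $G$ is divisibly generated. Since homomorphisms preserve divisibility and an abelian group generated by divisible elements is itself divisible, the abelianization $G/[G,G]$ is a divisible abelian group. Hence its quotient $\Gamma/[\Gamma,\Gamma]$ is finitely generated, abelian, and divisible, which forces it to be trivial.

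Next I would argue that $\Gamma$ is \emph{solvable}. Let $R\trianglelefteq G$ denote the solvable radical: a closed, connected, normal Lie subgroup with $G/R$ a connected semisimple Lie group. The map $\phi$ descends to an abstract surjection $G/R\twoheadrightarrow\Gamma/\phi(R)$ onto a countable group, and Corollary~\ref{NoCountabeIndexInSemisimple} forbids a proper kernel (the case $R=G$ being trivial). Thus $\Gamma=\phi(R)$ is a quotient of the solvable group $R$, hence solvable.

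A perfect solvable group is trivial, which completes the proof. The only step with genuine content is the solvability reduction, which depends on the nontrivial Corollary~\ref{NoCountabeIndexInSemisimple}; the perfectness step is a formal consequence of the divisibility facts in~\ref{FirstFacts}, and the remaining reductions are routine.
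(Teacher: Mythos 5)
Your argument is correct and follows essentially the same route as the paper: the reduction to $G$ connected, the use of Corollary~\ref{NoCountabeIndexInSemisimple} to conclude $\Gamma=\phi(R)$ is solvable, and the use of divisible generation to kill the abelianization. The only cosmetic difference is that you apply the divisibility argument to $G_{ab}$ directly (packaging the conclusion as ``$\Gamma$ is perfect''), whereas the paper first identifies $\Gamma\cong R/(R\cap N)$ and applies it to $R$.
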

\begin{proof}
The subgroup $\phi(G^\circ)\subseteq\Gamma$ has finite index and is
therefore finitely generated.
Hence it suffices to prove that $\phi$ is constant if $G$ is connected.

Suppose that $G$ is connected.
Let $R\trianglelefteq G$ denote the solvable radical, i.e. the unique maximal connected solvable closed normal
subgroup of $G$, see~\cite[Prop.~16.2.2.]{HilgertNeeb} and \cite[Def.~94.18, Prop.~94.19]{CPP}. 
Then the Lie algebra $\mathrm{Lie}(G/R)$ is semisimple. We put $N=\mathrm{ker}(\phi)$.
By Theorem~\ref{NormalInSemisimple},  $NR\subseteq G$ is closed and therefore a Lie subgroup. 
Since $\Gamma\cong G/N$ is countable, $G/NR$ is a countable
and hence discrete Lie group. It follows that $NR\subseteq G$ is open,
hence $G=NR$. Now we have $\Gamma\cong NR/N\cong R/R\cap N$.
Thus $\Gamma$ is solvable.
The connected Lie group $R$ is divisibly generated (see~\ref{FirstFacts}).
It follows that the abelianization $\Gamma_{ab}$ of $\Gamma$ is both divisible and
finitely generated abelian, whence $\Gamma_{ab}=\{1\}$ (again by~\ref{FirstFacts}). 
On the other hand, $\Gamma$ is solvable,
hence $\Gamma_{ab}\neq\{1\}$ if $\Gamma\neq\{1\}$. Hence $\Gamma=\{1\}$.
\end{proof}
\begin{Prob}
Is the
conclusion of Theorem~\ref{QuotientsOfLieGroups} still true if we drop the assumption that $G/G^\circ$ is finite?
\end{Prob}

\section{Quotients of locally compact groups}
We now extend the results from the previous sections to locally compact groups.
Here the fundamental work of Nikolov--Segal \cite{NS}, as well as the solution
of the 5th Hilbert problem, enter in an essential way. But first we draw a simple 
consequence from the facts collected in~\ref{FirstFacts}.
\begin{Lem}\label{FiniteIndexLieLemma}
Let $G$ be a connected locally compact group and let $E$ be a group of finite
exponent (e.g. a finite group), or a residually finite group. 
Then every abstract homomorphism
$G\longrightarrow E$ is trivial. In particular, a connected locally compact group
has no abstract proper subgroups of finite index, and no nontrivial residually finite quotients.
\end{Lem}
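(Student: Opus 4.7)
The plan is to combine the four facts recorded in~\ref{FirstFacts} in the most direct way possible. Let $\phi:G\longrightarrow E$ be an abstract homomorphism. Fact~(iv) provides a generating set $D\subseteq G$ consisting of divisible elements. Fact~(i) says that $\phi(D)$ consists of divisible elements of $E$. Fact~(ii) then says that the only divisible element of $E$ is the identity, both in the case of finite exponent and in the residually finite case. Hence $\phi$ is trivial on the generating set $D$ and therefore on all of $G$.

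For the ''In particular'' clause about finite index subgroups, I would first reduce to the normal case: if $H\leq G$ has finite index $n$, then the left action of $G$ on $G/H$ gives a homomorphism into the finite group $\mathrm{Sym}(G/H)$, whose kernel $N$ is a normal subgroup of $G$ contained in $H$, of index at most $n!$. Now the quotient map $G\longrightarrow G/N$ is an abstract homomorphism to a finite (hence finite-exponent) group, so by the main statement it is trivial, forcing $N=G$ and therefore $H=G$. For the statement about residually finite quotients, if $N\trianglelefteq G$ is such that $G/N$ is residually finite, then the quotient map $G\longrightarrow G/N$ is itself covered by the main statement and must be trivial, so $G/N=\{1\}$.

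There is no real obstacle here, since the substantive work has already been done in collecting the facts in~\ref{FirstFacts}: the deep ingredients are the divisible generation of connected locally compact groups (via the solution of Hilbert's fifth problem and the structure theory of connected Pro-Lie groups), together with the elementary observation that finite-exponent and residually finite groups contain no nontrivial divisible elements. The proof is essentially a one-line application of these results.
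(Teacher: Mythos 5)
Your proof is correct and follows exactly the same route as the paper: divisible generation of $G$ from~\ref{FirstFacts}(iv), absence of nontrivial divisible elements in $E$ from~\ref{FirstFacts}(ii), and the permutation-representation argument to reduce finite-index subgroups to finite quotients. Nothing to add.
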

\begin{proof}
By \ref{FirstFacts}, the group $G$ is divisibly generated, and
$E$ contains no divisible elements. 
Hence every homomorphism $G\longrightarrow E$ is constant.
For the last assertion we note that
a group $G$ which has a proper finite index 
subgroup $H$ maps nontrivially into the finite symmetric group
$\mathrm{Sym}(G/H)$. Hence the existence of proper finite index subgroups implies the
existence of proper finite index normal subgroups.
\end{proof}
The situation is quite different for non-normal subgroups of countable index: in 
connected Lie groups, such groups always exist.
\begin{Thm}
\label{Kallman}
The group $\mathrm{SL}_n\mathbb C$ has subgroups of infinite countable index.
Therefore every connected Lie group $G$ has subgroups of infinite countable index.
\end{Thm}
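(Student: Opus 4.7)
The plan is to establish both assertions by a single construction: abstractly embed the group into $S_\infty:=\mathrm{Sym}(\mathbb N)$ via Kallman's theorem, and use Lemma~\ref{FiniteIndexLieLemma} to guarantee an infinite orbit.

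For $G=\mathrm{SL}_n\mathbb C$, being a closed matrix group over $\mathbb C$, Kallman's theorem provides an injective abstract homomorphism $\iota\colon G\hookrightarrow S_\infty$. I regard this as the main substantive input and the principal obstacle of the argument, and would invoke it as a black box.

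With $\iota$ in hand I would analyze the $\iota(G)$-orbits on $\mathbb N$. For each $m\in\mathbb N$ the subgroup $\iota^{-1}(\mathrm{Stab}(m))$ of $G$ has index equal to the cardinality of the orbit of $m$, hence at most $\aleph_0$. If every orbit were finite, each such stabilizer would have finite index in the connected Lie group $G$; by Lemma~\ref{FiniteIndexLieLemma} each would then coincide with $G$, forcing $\iota$ to be trivial and contradicting its injectivity. Hence some orbit is countably infinite, and the pullback of its stabilizer is a subgroup of $\mathrm{SL}_n\mathbb C$ of infinite countable index.

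For the second assertion, the same orbit argument applies to any non-trivial connected Lie group $G$ once we have an abstract embedding of $G$ into $S_\infty$. If $G$ is not itself a matrix group, I would reduce to the linear case by passing to the adjoint image $\mathrm{Ad}(G)\subseteq\mathrm{GL}(\mathrm{Lie}(G))$ whenever $G$ is non-abelian: Kallman applies to this matrix group, produces a subgroup of $\mathrm{Ad}(G)$ of infinite countable index, and the pullback under $\mathrm{Ad}$ yields the required subgroup of $G$. In the remaining abelian case I would simply observe that a non-trivial connected abelian Lie group admits abstract surjections onto $\mathbb Q$ whose kernels give subgroups of countable index. In every case the real content lies in Kallman's embedding theorem; everything else is bookkeeping about orbits and finite-index subgroups.
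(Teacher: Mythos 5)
Your proof is correct, and it takes a genuinely different route from the paper's. The paper gives what it calls an ``algebraic-geometric reason'': it identifies $\mathbb C$ with the algebraic closure $K$ of $\mathbb Q_p$, lets $\mathrm{SL}_nK$ act on the (non-discrete) euclidean building attached to the extended valuation, and observes that the countable invariant set of vertices yields stabilizer subgroups of countable index; the passage to a general connected Lie group is then made by choosing any nontrivial representation $G\to\mathrm{SL}_n\mathbb C$ and pulling back, with Lemma~\ref{FiniteIndexLieLemma} excluding finite index. You instead invoke Kallman's theorem --- which is in fact one of the two references the paper cites for this statement --- to obtain an abstract embedding $\iota\colon\mathrm{SL}_n\mathbb C\hookrightarrow \mathrm{Sym}(\mathbb N)$, and run an orbit argument: Lemma~\ref{FiniteIndexLieLemma} forces every $\iota(G)$-orbit to be a singleton or infinite, and injectivity of $\iota$ rules out the former case universally, so some point stabilizer has infinite countable index. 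This is tidy and makes the role of Lemma~\ref{FiniteIndexLieLemma} very transparent, though it is less constructive: the paper's argument exhibits the subgroups concretely (parahorics), whereas yours only establishes their existence via a black box. Your reduction for general $G$ is also slightly more fussy than the paper's --- you split into the non-abelian case (pass to $\mathrm{Ad}(G)\subseteq\mathrm{GL}(\mathrm{Lie}(G))$, which is again a matrix group of the right cardinality, and pull back) and the abelian case (abstract surjections onto $\mathbb Q$, which the paper already notes in its Example section for $\mathrm U(1)$). Both reductions are correct; yours avoids having to justify the general claim that any connected Lie group has a nontrivial finite-dimensional representation, at the mild cost of a case split.

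One small caution worth recording: Kallman's theorem comes with a cardinality hypothesis (``reasonably sized''). For $\mathrm{SL}_n\mathbb C$ and for $\mathrm{Ad}(G)$ this is satisfied since Lie groups have cardinality $2^{\aleph_0}$, but it is good practice to say so explicitly when invoking the theorem as a black box.
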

\begin{proof}
This is proved in \cite[Thm.~2.1]{ST2} and in \cite[Thm.~1]{Kallman}. The algebraic-geometric reason 
is the following.
Let $p$ be a fixed prime.
The group $\mathrm{SL}_n\mathbb Q_p$ acts on its Bruhat--Tits building $X$,
which has a countable set of vertices. Thus $\mathrm{SL}_n\mathbb Q_p$
has subgroups of countable index, namely the parahoric subgroups.

Now let $K$ denote the algebraic closure of $\mathbb Q_p$.
The discrete valuation on $\mathbb Q_p$ extends to a nondiscrete
valuation on $K$, and there is a corresponding nondiscrete euclidean
building $X_K$ on which $\mathrm{SL}_nK$ acts. This building is not simplicial
any more, but nevertheless it has a countable invariant subset of 'vertices'.
Thus $\mathrm{SL}_nK$ has subgroups of countable index.

Next we note that all algebraically closed fields of characteristic $0$ and
cardinality $2^{\aleph_0}$ are isomorphic to $\mathbb C$. Thus 
$\mathrm{SL}_n\mathbb C\cong\mathrm{SL}_nK$ has abstract subgroups of countable index.

Finally, every connected Lie group $G$ admits a nontrivial representation
$G\longrightarrow \mathrm{SL}_n\mathbb C$, for some $n$. Thus $G$ has nontrivial
abstract subgroups of countable index as well, and the index cannot be finite by
Lemma~\ref{FiniteIndexLieLemma}.
\end{proof}
More generally, one may look for subgroups in a locally compact group
$G$ which are not Borel sets, or even not measurable with respect to the Haar measure.
Note that the $\sigma$-additivity of the Haar measure implies that subgroups of countable
index cannot be measurable, so the subgroups appearing in Theorem~\ref{Kallman} cannot
be measurable.
The existence of non-measurable subgroups in general compact
groups is studied in detail in a recent article by 
Hern\'andez--Hofmann--Morris \cite{HHM}.

\smallskip
In contrast to Theorem~\ref{Kallman}, we have the following result for \emph{normal} subgroups
of countable index. For the next results in this section, 'countable' may be replaced
by 'of cardinality strictly less than $2^{\aleph_0}$' without changing any of the arguments.
\begin{Prop}
\label{LieGroupCountableIndexNormal}
Let $G$ be a Lie group, with $G/G^\circ$ finite. Suppose that
$N\trianglelefteq G$ is an abstract normal subgroup of countable index.
Then $G/N$ is virtually solvable. If $G$ is connected, then $G/N$ is solvable.
\end{Prop}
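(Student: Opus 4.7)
The plan is to mimic the structure of the proof of Theorem~\ref{QuotientsOfLieGroups}, but replacing the appeal to the fact that finitely generated divisible abelian groups are trivial by the stronger input of Corollary~\ref{NoCountabeIndexInSemisimple}, which handles countable (not just finitely generated) quotients of semisimple groups. The finiteness of $G/G^\circ$ will only be used to bootstrap from the connected case to the general case.

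First I would dispose of the connected case, which I expect to be the main content. Assume $G$ is connected, and let $R\trianglelefteq G$ be the solvable radical, so $R$ is a closed connected solvable normal subgroup with $G/R$ a connected semisimple Lie group. Since $R$ is normal and $N$ is a subgroup, $NR$ is a subgroup of $G$, and the canonical map $G/N\longrightarrow G/NR$ is surjective; hence $G/NR$ is countable. Therefore $NR/R$ is a normal subgroup of countable index in the connected semisimple Lie group $G/R$. By Corollary~\ref{NoCountabeIndexInSemisimple}, $NR/R=G/R$, that is, $G=NR$. Then
\[
 G/N \;=\; NR/N \;\cong\; R/(R\cap N),
\]
which is a quotient of the solvable group $R$, hence solvable.

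For the general case, I would reduce to the connected one by chasing indices. Since $G/G^\circ$ is finite, $G^\circ$ has finite index in $G$, and therefore $G^\circ N/N$ has finite index in $G/N$. On the other hand,
\[
 [G^\circ : G^\circ \cap N] \;=\; [G^\circ N : N] \;\leq\; [G:N],
\]
so $G^\circ\cap N$ is a normal subgroup of countable index in the connected Lie group $G^\circ$. Applying the already established connected case, $G^\circ/(G^\circ\cap N)\cong G^\circ N/N$ is solvable. Thus $G/N$ contains a solvable subgroup of finite index, i.e.\ it is virtually solvable.

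There is no real obstacle here; the argument is essentially a bookkeeping exercise once Corollary~\ref{NoCountabeIndexInSemisimple} is available. The one subtle point to keep in mind is that $N$ is merely abstractly normal, not assumed closed, so one must work only with the normality of $N$ and the obvious subgroup $NR$ (respectively $G^\circ N$), both of which make sense abstractly because $R$ and $G^\circ$ are normal in $G$. No continuity or measurability of $\phi$ is used anywhere in the argument.
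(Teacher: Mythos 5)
Your proposal is correct and follows essentially the same route as the paper: reduce to the connected case via the finite-index image of $G^\circ$ in $G/N$, pass to the solvable radical $R$, use Corollary~\ref{NoCountabeIndexInSemisimple} to conclude $G=NR$, and identify $G/N\cong R/(N\cap R)$ as a quotient of a solvable group. The only difference is a slightly more explicit index-chasing in the reduction step, which does not change the argument.
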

\begin{proof}
Since the image of $G^\circ$ in $G/N$ has finite index,
it suffices to show that the image of $G^\circ$ in $G/N$ is solvable.
So we may assume that $G$ is connected. As in the proof of Theorem~\ref{QuotientsOfLieGroups},
let $R\trianglelefteq G$ denote the solvable radical of $G$, Then $NR/R$ is a normal
subgroup of countable index in the semisimple Lie group $G/R$. 
By Corollary~\ref{NoCountabeIndexInSemisimple} we have $NR=G$. Thus $G/N=NR/R\cong R/N\cap R$ is solvable.
\end{proof}
In order to extend this result to general locally compact groups, a crucial
ingredient is Iwasawa's splitting theorem. The global version of this result is an extremely
useful answer to Hilbert's 5th problem.
\begin{Thm}[The Global Splitting Theorem]
\label{GST} 
Let $G$ be a locally compact group and let $U\subseteq G$ be a neighborhood of the identity. Then
there is a compact  subgroup $K\subseteq G$ contained in $U$, a simply connected 
Lie group $L$, and an open and continuous homomorphism 
\[\psi:L\times K\longrightarrow G\] with discrete kernel such that $\psi(1,k)=k$
for all $k\in K$.
\end{Thm}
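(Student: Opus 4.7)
The plan is a two-stage reduction: first, via van Dantzig's theorem, to the almost connected case; then, via Yamabe's approximation theorem, to the splitting of an extension of a simply connected Lie group by a compact group.

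I would begin by passing to an open almost connected subgroup. Since $G/G^\circ$ is totally disconnected and locally compact, van Dantzig's theorem provides a compact open subgroup of $G/G^\circ$, whose preimage $H\subseteq G$ is an open almost connected subgroup. Any subgroup open in $H$ is open in $G$, so it suffices to construct $\psi:L\times K\to H$ with the required properties. Replacing $G$ by $H$ and $U$ by $U\cap H$, I may therefore assume that $G$ itself is almost connected. The local form of the Gleason--Montgomery--Zippin--Yamabe approximation theorem (i.e.\ the solution of Hilbert's Fifth Problem) then produces a compact normal subgroup $K\subseteq U$ of $G$ such that the quotient $Q:=G/K$ is an (almost connected) Lie group. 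This $K$ will be the compact subgroup in the conclusion.

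Next I would construct $L$ together with the relevant extension. Let $p:L\to Q^\circ$ be the universal covering homomorphism, so $L$ is a simply connected Lie group with discrete central kernel $\ker p$. With $\pi:G\to Q$ the quotient map and $G_1:=\pi^{-1}(Q^\circ)$ the (open) preimage of $Q^\circ$, form the topological fibre product
\[
  \tilde G \;:=\; \{(g,\ell)\in G_1\times L : \pi(g)=p(\ell)\}.
\]
Projection to $L$ presents $\tilde G$ as a locally compact extension $1\to K\to\tilde G\to L\to 1$, while the projection $q:\tilde G\to G$, $(g,\ell)\mapsto g$, is a continuous open homomorphism with discrete kernel $\{1\}\times\ker p$ and open image $G_1$. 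A continuous isomorphism $\tilde G\cong L\times K$ that restricts to the identity on $K$ then yields $\psi$ upon composition with $q$, and the normalization $\psi(1,k)=k$ is automatic.

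The main obstacle is therefore this splitting: showing that the extension $1\to K\to\tilde G\to L\to 1$ is a topological \emph{direct} product. This is the classical content of Iwasawa's splitting theorem and uses both of the structural hypotheses, the simple-connectedness of $L$ and the compactness of $K$. The idea is first to produce a continuous local section of $\tilde G\to L$ near the identity (available because $L$ is a Lie group), then to use simple-connectedness of $L$ to extend it to a continuous global section $s:L\to\tilde G$, and finally to modify $s$ by a pointwise correction in $K$---obtained by averaging over the Haar measure of $K$---so that it becomes a homomorphism whose image centralizes $K$. Once this splitting is in hand, the remaining bookkeeping to produce $\psi$ and verify $\psi(1,k)=k$ is formal.
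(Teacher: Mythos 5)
The paper does not give its own proof of Theorem~\ref{GST}; it defers entirely to the literature (Iwasawa, Glu\v skov, and the writeups in Hofmann--Morris and Hofmann--Kramer). Your outline reconstructs exactly the standard route that these references carry out: van Dantzig reduction to an almost connected open subgroup, the Gleason--Yamabe theorem to manufacture a small compact normal $K$ with Lie quotient, the fibre product of $G$ with the universal cover $L$ of $(G/K)^\circ$, and then Iwasawa's splitting of the resulting extension $1\to K\to\tilde G\to L\to 1$. The bookkeeping you do around the fibre product (openness of $q$, discreteness of its kernel $\{1\}\times\ker p$, and the normalization $\psi(1,k)=k$) is correct, and you accurately identify the splitting as the genuine content.

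The one place where your sketch is too loose is the final paragraph describing how the extension splits. The suggestion of taking a continuous global section $s:L\to\tilde G$ and then ``averaging over the Haar measure of $K$'' to turn it into a homomorphism works when $K$ is abelian---there the failure of $s$ to be a homomorphism is a genuine $2$-cocycle with values in $K$, and averaging trivializes it. But $K$ here is an arbitrary compact group, and a $K$-valued ``cocycle'' does not live in a module one can integrate. Iwasawa's actual argument is more delicate: one first uses compactness of $K$ to show that the conjugation action $\tilde G\to\mathrm{Aut}(K)$ lands, on an open subgroup, in $\mathrm{Inn}(K)$ (so $\tilde G=K\cdot C_{\tilde G}(K)$ up to components), then passes to the centralizer of $K$ to reduce to a \emph{central} extension of $L$ by the compact abelian group $C_{\tilde G}(K)\cap K$, and only at that stage does simple connectedness of $L$ together with an averaging/vanishing-of-$H^2$ argument produce the splitting homomorphism. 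Since you explicitly defer to Iwasawa for this step, the logical chain of your proof is intact, but the parenthetical description of ``averaging over $K$'' would mislead a reader into thinking the nonabelian case is as easy as the abelian one.
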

\begin{proof}
The local version of this result, assuming $G$ to be connected and approximable by Lie groups,
is due to Iwasawa~\cite{Iwasawa}. See also Glu\v skov~\cite{Gluskov}. The present version (and some more
remarks on the history) can be found in Hofmann--Morris~\cite[Thm.~4.1]{hofmori} 
and in \cite[Thm.~4.4]{HofmannKramer}.
\end{proof}
The next result by Nikolov--Segal 
is considerably more difficult to prove than Proposition~\ref{LieGroupCountableIndexNormal}. 
A proof will be given in Section 6 below.
\begin{Thm}[Nikolov--Segal]\label{N-S1}
Let $G$ be a compact group, with $G/G^\circ$ finite. Suppose that
$N\trianglelefteq G$ is an abstract normal subgroup of countable index.
Then $G/N$ is virtually abelian. If $G$ is connected, then $G/N$ is
abelian. If $G/N$ is residually finite, then $N$ is closed.
\end{Thm}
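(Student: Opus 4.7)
The plan is to reduce to the connected case and then invoke the deep Nikolov--Segal input---namely, that a compact connected perfect group admits no nontrivial countable abstract quotient---whose proof is postponed to Section~6.

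First I would reduce to the case where $G$ is connected. Since $G/G^\circ$ is finite, the image of $G^\circ$ in $G/N$ has finite index, so it suffices to prove that $G^\circ/(G^\circ\cap N)$ is abelian in order to obtain the virtually abelian claim. Assume therefore that $G$ is compact and connected. The structure theorem for compact connected groups (see e.g.\ \cite[Thm.~9.24]{HMCompact}) provides a decomposition $G=Z\cdot K$, where $Z=Z(G)^\circ$ is a compact connected abelian group and $K=\overline{[G,G]}$ is a compact connected perfect group. The key step is the Nikolov--Segal theorem, to be established in Section~6: every abstract homomorphism from a compact connected perfect group to a countable group is trivial. Applied to $\phi|_K\colon K\to G/N$, it yields $K\subseteq N$, so that $G/N$ is an abstract quotient of the abelian group $G/K\cong Z/(Z\cap K)$, and hence abelian.

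For the final assertion, assume $G/N$ is residually finite (but $G$ need not be connected again). I would write $N=\bigcap_\Delta N_\Delta$, where $\Delta$ ranges over the normal finite-index subgroups of $G/N$ and $N_\Delta:=\phi^{-1}(\Delta)$. Each $G/N_\Delta\cong(G/N)/\Delta$ is finite. By Lemma~\ref{FiniteIndexLieLemma} applied to the connected locally compact group $G^\circ$, the composition $G^\circ\hookrightarrow G\to G/N_\Delta$ is trivial, so $N_\Delta\supseteq G^\circ$; since $G/G^\circ$ is finite, $N_\Delta$ is a union of finitely many cosets of $G^\circ$ and is therefore clopen. Consequently $N=\bigcap_\Delta N_\Delta$ is closed, and by Lemma~\ref{CountableIsDiscrete} it is in fact open.

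The principal obstacle is clearly the Nikolov--Segal step; the algebraic reductions above are essentially formal once the structure theorem and that input are in hand. The automatic triviality of countable abstract quotients of a compact connected perfect group requires the fine quantitative control of commutator widths in compact semisimple groups developed by Nikolov and Segal, and this is precisely what Section~6 is devoted to.
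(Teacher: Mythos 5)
Your proof is correct and follows essentially the same route as the paper: reduce to the connected case via the finiteness of $G/G^\circ$, apply the Approximation/Structure Theorem for compact connected groups, and use the Section~6 input (triviality of countable abstract quotients of the compact connected semisimple part) to conclude that $\Gamma_0$ is abelian. Your handling of the residual-finiteness clause---applying Lemma~\ref{FiniteIndexLieLemma} to each finite quotient of $G/N$ to see that $G^\circ\subseteq N$---is a slight, equally elementary variation on the paper's argument, which instead uses divisibility of the connected center $Z$ together with the abelianness just established.
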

Our final result about quotients of locally compact groups is as follows.
\begin{Thm}
Let $G$ be a locally compact group, with $G/G^\circ$ compact.
Suppose that $\Gamma$ is a finitely generated group and that 
\[
 \phi:G\longrightarrow\Gamma
\]
is an abstract surjective homomorphism. Then $\Gamma$ is finite. If $G/G^\circ$ is topologically
finitely generated, then $\phi$ is continuous and open.
\end{Thm}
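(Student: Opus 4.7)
The plan is to reduce, via the Global Splitting Theorem~\ref{GST}, to a factorisation $G = \psi(L \times K)$ with $L$ simply connected Lie and $K$ compact, and then to treat the Lie factor with Proposition~\ref{LieGroupCountableIndexNormal} and the compact factor with Theorem~\ref{N-S1}, using divisibility (Fact~\ref{FirstFacts}) as the glue that kills the Lie part.

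Applying Theorem~\ref{GST} to $G$ yields an open continuous homomorphism $\psi\colon L\times K\to G$ with discrete kernel. Its image $M:=\psi(L\times K)$ is an open subgroup of $G$ and therefore contains $G^\circ$; since $G/G^\circ$ is compact, the discrete quotient $G/M$ is finite. Hence $\phi(M)$ is a finite-index, finitely generated subgroup of $\Gamma$, and it suffices to show $\phi(M)$ is finite. Replacing $G,\Gamma$ by $M,\phi(M)$, we may assume $\psi$ surjective. Set $\Lambda:=\phi\psi(L\times\{1\})$ and $\Delta:=\phi(K)$: these commute in $\Gamma$, satisfy $\Lambda\Delta=\Gamma$, and $\Lambda\trianglelefteq\Gamma$.

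Proposition~\ref{LieGroupCountableIndexNormal}, applied to the countable-index kernel of $\phi\psi|_L$, shows $\Lambda$ is solvable; moreover $L$ is divisibly generated by Fact~\ref{FirstFacts}(iv), so $\Lambda$ is divisibly generated. At the same time, $\Gamma/\Lambda \cong \Delta/(\Lambda\cap\Delta)$ is a countable abstract quotient of $K$; after shrinking $K$ to an open subgroup of finite index so that $K/K^\circ$ is finite (possible by the pro-Lie structure of compact groups), Theorem~\ref{N-S1} gives $\Gamma/\Lambda$ virtually abelian. Since $\Gamma$ is finitely generated, $\Gamma/\Lambda$ is then polycyclic-by-finite, hence residually finite. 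Combined with $\Lambda\subseteq[\Gamma,\Gamma]$ (which follows from Lemma~\ref{FiniteIndexLieLemma} because $\Gamma_{ab}$ is residually finite) and a derived-series analysis of the divisibly generated solvable $\Lambda$, one concludes that $\Gamma$ itself is residually finite, whence $\Lambda=\{1\}$ by Fact~\ref{FirstFacts}(ii). Thus $\Gamma=\Delta$ is a finitely generated residually finite abstract quotient of the compact group $K$, and the last clause of Theorem~\ref{N-S1} forces $\ker(\phi|_K)$ to be closed. Then $\Gamma\cong K/\ker(\phi|_K)$ is a Hausdorff, compact, countable group, which by Lemma~\ref{CountableIsDiscrete} is discrete and hence finite.

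For the continuity claim, assume $G/G^\circ$ is topologically finitely generated. Since $\Gamma$ is now known to be finite and $G^\circ$ admits no proper finite-index subgroup by Lemma~\ref{FiniteIndexLieLemma}, $\phi$ kills $G^\circ$ and factors through the topologically finitely generated profinite group $G/G^\circ$ onto the finite group $\Gamma$. The finite-index kernel of this abstract surjection is automatically open, by the Nikolov--Segal theorem that every finite-index subgroup of a topologically finitely generated profinite group is open. Consequently $\phi$ is continuous and open. The main obstacle in the plan is the step bridging ``$\Lambda$ solvable divisibly generated'' and ``$\Gamma/\Lambda$ virtually abelian'' to residual finiteness of $\Gamma$ itself, which is essential for the divisibility argument to collapse $\Lambda$; a secondary difficulty is justifying the reduction of $K$ to a subgroup with finite component group.
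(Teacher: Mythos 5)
Your route diverges from the paper's in an interesting way, but it contains a genuine gap that cannot be repaired within the tools you are using. You apply the Global Splitting Theorem~\ref{GST} to $G$ itself and obtain $\psi\colon L\times K\to G$; the compact factor $K$ it produces is \emph{not} guaranteed to have finite component group, and your suggestion to ``shrink $K$ to an open subgroup of finite index so that $K/K^\circ$ is finite'' is false. If $K$ is, say, $\mathbb{Z}_p$ (which does occur: take $G=\mathbb{Z}_p$, so $L=\{1\}$ and $K$ is a compact open subgroup), then every finite-index open subgroup of $K$ is again infinite profinite, and $K/K^\circ$ is never finite. Consequently Theorem~\ref{N-S1}, which requires finite component group and which is what you invoke twice (once to get $\Gamma/\Lambda$ virtually abelian and once at the end for closedness of $\ker(\phi|_K)$), is simply not applicable to $K$. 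The paper avoids this by applying \ref{GST} to $G^\circ$ rather than $G$, and---more importantly---by invoking the full Nikolov--Segal theorem \cite[Thm.~5.25]{NS}, which the author explicitly notes is \emph{stronger} than \ref{N-S1}: it handles abstract finitely generated quotients of an arbitrary compact group. Indeed, the special case $G^\circ=\{1\}$ of the statement is exactly NS~5.25 for profinite groups, so the theorem cannot be deduced from \ref{N-S1} alone; your setup was blocked from the start.

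A secondary weakness is the step asserting that ``a derived-series analysis of the divisibly generated solvable $\Lambda$'' together with residual finiteness of $\Gamma/\Lambda$ forces $\Gamma$ to be residually finite. This is not automatic: an extension of a residually finite group by a solvable group need not be residually finite, and making it go through here would require something like P.~Hall's theorem that finitely generated abelian-by-polycyclic-by-finite groups are residually finite, a result not available in the paper. The paper's proof sidesteps this entirely: it first shows $\Gamma/\Gamma_0$ is finite via NS~5.25, so that $\Gamma_0=\phi(G^\circ)$ is finitely generated, then uses Theorem~\ref{QuotientsOfLieGroups} on the finitely generated quotient $\Gamma_0/\bar\phi(\{1\}\times K)$ of the connected Lie group $L$ to conclude $\bar\phi(\{1\}\times K)=\Gamma_0$, applies NS~5.25 again to the compact group $K$ to get $\Gamma_0$ finite, and then kills $\Gamma_0$ in one stroke via divisibility of $G^\circ$ and Lemma~\ref{FiniteIndexLieLemma}, with no residual-finiteness bookkeeping of $\Gamma$ needed.
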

\begin{proof}
Let $\Gamma_0=\phi(G^\circ)$ and $N=\mathrm{ker}(\phi)$. 
Since $G/G^\circ$ is compact and since $G/NG^\circ$ is finitely generated, the main result 
\cite[Thm.~5.25]{NS} implies
that $G/NG^\circ\cong\Gamma/\Gamma_0$ is finite. Thus $\Gamma_0$ is also finitely generated.
We put $\psi:L\times K\longrightarrow G^\circ$ as in Theorem~\ref{GST} and we note that this homomorphism is
necessarily surjective. We consider the composite
\[
\begin{tikzcd}
L\times K \arrow{rd}{\bar\phi} \arrow{r}{\psi} & G^\circ \arrow{d}{\phi} \\
& \Gamma_0.
\end{tikzcd}
\]
The connected Lie group $L\times\{1\}$ maps onto the finitely
generated group $\Gamma_0/\bar\phi(\{1\}\times K)$. By Theorem~\ref{QuotientsOfLieGroups}, we have
$\bar\phi(\{1\}\times K)=\Gamma_0$. In particular, $K$ maps onto the finitely generated
group $\Gamma_0$. Again by \cite[Thm.~5.25]{NS}, the quotient $\Gamma_0$ is finite. Since $G^\circ$ is divisibly
generated (see \ref{FirstFacts}), the group $\Gamma_0$ is trivial and $\Gamma$ is finite.
Moreover, $\phi$ factors through the projection $G\longrightarrow G/G^\circ$. 
Again by \cite[Thm.~5.25]{NS}, the composite 
$G\longrightarrow G/G^\circ\longrightarrow\Gamma$ is continuous if $G/G^\circ$ is topologically finitely generated.
\end{proof}
The previous proof relies heavily on Nikolov--Segal \cite[Thm.~5.25]{NS}, which is stronger
than Theorem~\ref{N-S1} above. This result depends, among other things, on the classification
of the finite simple groups, and on their advanced structure theory.

\section{Products and ultraproducts of nonabelian simple groups}

In order to prove Theorem~\ref{N-S1}, we will have to consider normal
subgroups in infinite products of compact semisimple Lie groups.
In this section we take a look at normal subgroups in products of abstract groups.
We will see that this leads almost inevitably to questions about ultraproducts.

If $G_1,\ldots,G_m$ are nonabelian simple groups, then it is easy to see that
the product $G_1\times\cdots\times G_m$ is a perfect group, and that every 
proper normal subgroup of the product is contained in the kernel of at least one projection map 
\[pr_j:G_1\times\cdots\times G_m\longrightarrow G_j.\]
But both questions become considerably more interesting for infinite products of groups.


First we look at products of perfect groups.
Given a perfect group $H$, let $c(H)$ denote the \emph{commutator length} of $H$,
that is, the smallest integer $n$ such that
every element $h\in H$ can be written as a product of at most $n$ commutators.
If no such integer exists, we put $c(H)=\infty$.
\begin{Lem}\label{PerfectLemma}
Let $(G_i)_{i\in I}$ be a family of perfect groups. Put $I_\infty=\{i\in I\mid c(G_i)=\infty\}$.
Then the product $\prod_{i\in I}G_i$ is perfect if and only if $I_\infty$ is finite and
if the function $i\mapsto c(G_i)$ is bounded on $I_0=I\setminus I_\infty$ .
\end{Lem}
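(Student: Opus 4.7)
The plan is to prove both implications by a coordinate-wise argument. I would first decompose $I = I_\infty \sqcup I_0$ and write
\[
\prod_{i\in I}G_i \;=\; \Big(\prod_{i\in I_\infty}G_i\Big)\times\Big(\prod_{i\in I_0}G_i\Big),
\]
so that the two hypotheses in the statement can be attacked independently. The key observation throughout is that a commutator in a direct product factors coordinate-wise: $[(a_i),(b_i)] = ([a_i,b_i])$. Consequently, the commutator length of an element $(g_i)$ in a direct product is the supremum of the commutator lengths of its components in the factors.

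For the \emph{if} direction, assume $I_\infty$ is finite and $c(G_i)\leq n$ for all $i\in I_0$. First, a finite direct product of perfect groups is perfect, since any single-coordinate element $(1,\ldots,[a,b],\ldots,1)$ is a commutator in the product; hence $\prod_{i\in I_\infty}G_i$ is perfect. Second, for any tuple $(g_i)_{i\in I_0}$, I write each $g_i$ as a product of exactly $n$ commutators in $G_i$ (padding with trivial commutators where fewer suffice): $g_i = \prod_{j=1}^n [a_i^{(j)},b_i^{(j)}]$. Bundling these choices across all $i\in I_0$ gives
\[
(g_i)_{i\in I_0} \;=\; \prod_{j=1}^{n}\bigl[(a_i^{(j)})_{i\in I_0},(b_i^{(j)})_{i\in I_0}\bigr],
\]
so $\prod_{i\in I_0}G_i$ is even of commutator length $\leq n$. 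The original product, being a direct product of two perfect groups, is then perfect.

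For the \emph{only if} direction, I argue the contrapositive. Suppose that $I_\infty$ is infinite or that $c$ is unbounded on $I_0$; in either case there is a sequence of indices $i_1,i_2,\ldots$ and elements $g_{i_k}\in G_{i_k}$ with $g_{i_k}$ not expressible as a product of fewer than $k$ commutators in $G_{i_k}$. Let $g\in\prod_{i\in I}G_i$ be the element with coordinate $g_{i_k}$ at position $i_k$ and trivial elsewhere. Any putative expression $g = \prod_{j=1}^n [a^{(j)},b^{(j)}]$ in the product would, upon projection to coordinate $i_{n+1}$, display $g_{i_{n+1}}$ as a product of $n$ commutators in $G_{i_{n+1}}$, contradicting the choice of $g_{i_{n+1}}$. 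Hence $g$ is not in the commutator subgroup, and the product is not perfect.

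The argument is essentially routine; there is no real obstacle, as the entire result is driven by the simple fact that commutators in a direct product are computed coordinate-wise. If anything, the only point requiring a moment's care is the uniform ``padding'' step in the if direction, which uses that every perfect group contains the trivial commutator $[1,1]=1$ so that shorter factorizations can be extended to length exactly $n$.
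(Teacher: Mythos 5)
Your proof is correct and follows essentially the same route as the paper: decompose $I$ into $I_\infty$ and $I_0$, use that a finite product of perfect groups is perfect plus coordinate-wise bundling of commutators for the sufficiency, and build a single bad element with unboundedly long commutator lengths in its coordinates for the contrapositive of necessity. The only cosmetic difference is that you make the padding step and the coordinate-wise commutator computation explicit, which the paper leaves implicit.
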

\begin{proof}
Suppose that $I_\infty$ is finite. A finite product of perfect groups is perfect,
hence $L=\prod_{j\in I_\infty}G_j$ is perfect. If the function
$k\mapsto c(G_k)$ is bounded on $I_0$ by a constant
$n\in\mathbb N$, then every element in $M=\prod_{k\in I_0}G_k$ is a product
of at most $n$ commutators, and hence $M$ is perfect as well. Therefore 
$\prod_{i\in I}G_i=L\times M$ is perfect.

If $I_\infty$ is infinite or if the function $k\mapsto c(G_k)$ is unbounded on $I_0$, then
we can find an injection $\iota:\mathbb N\longrightarrow I$ such that
for every $n\in\mathbb N$ there is an element $g_{\iota(n)}\in G_{\iota(n)}$
which cannot be written as a product of less than $n$ commutators.
Put $g_j=1$ if $j\not\in \iota(\mathbb N)$. Then $(g_i)_{i\in I}$ is
an element in $\prod_{i\in I}G_i$ which is not a product of commutators.
Therefore $\prod_{i\in I}G_i$ is not perfect.
\end{proof}
In view of the previous lemma, we should expect to encounter lots of 'strange'
normal subgroups in infinite products of nonabelian simple groups.
In particular we cannot expect to catch all proper normal subgroups by
the projection maps
\[
 pr_j:\prod_{i\in I} G_i \longrightarrow G_j.
\]
But if we replace the finiteness of the index set $I$ by a compactness condition, we gain
some control over normal subgroups in the product. This leads directly to ultraproducts, 
as we will see.
We first change our viewpoint slightly.
\subsection{Presheaves of groups}
Recall that a \emph{presheaf of groups} $\mathcal G$ on a topological space $X$
is a contravariant group functor \[\mathcal G:\mathbf{Open}(X)\longrightarrow \mathbf{Group}\]
which assigns to every open set $U\subseteq X$ a group $\mathcal G(U)$.
For  open sets
$U\subseteq V\subseteq X$, there are restriction homomorphisms $pr^V_U:\mathcal G(V)\longrightarrow\mathcal G(U)$,
subject to the usual compatibility condition $pr_U^V\circ pr_V^W=pr_U^W$, for $U\subseteq V\subseteq W$,
and with $pr_U^U=\id_{\mathcal G(U)}$.
Recall also that the \emph{stalk} at a point $p\in X$ is defined as the direct limit
\[
 \mathcal G(p)=\lim_\rightarrow\{\mathcal G(U)\mid U\in\mathbf{Open}(U)\text{ and }p\in U\}.
\]
For every $p\in U\in\mathbf{Open}(X)$, there is a natural restriction homomorphism
\[pr_p^U:\mathcal G(U)\longrightarrow\mathcal G(p).\]
Given a continuous map $\phi:X\longrightarrow Y$ between topological spaces, 
there is the \emph{direct image presheaf} $\phi_*\mathcal G$ on $Y$
which assigns to $W\in\mathbf{Open}(Y)$ the group \[\phi_*\mathcal G(W)=\mathcal G(\phi^{-1}(W)).\]
We note that $\phi_*\mathcal G(Y)=\mathcal G(X)$.

\subsection{Products of groups as presheaves}
We may view the family of groups $(G_i)_{i\in I}$ as the group-valued presheaf
$\mathcal G$ on the discrete topological space $I$ which assigns to every
nonempty  subset $J\subseteq I$ the group $\mathcal G_J=\prod_{j\in J}G_j$,
and to $\emptyset\subseteq I$ the trivial group.
The stalks are then the groups $\mathcal G(i)=G_i$. 
The problem that not all normal subgroups in $\prod_{i\in I}G_i$ can be detected
by the projection maps $pr_j:\prod_{i\in I}G_i\longrightarrow G_j$ can be rephrased as
follows: 
\begin{quote}
 \emph{There are 
not enough stalks in the space $I$ in order to detect all normal subgroups in the product $\prod_{i\in I}G_i$. }
\end{quote}
We remove this deficiency by 'enlarging' $I$ through a map $I\longrightarrow X$,
where $X$ is a compact space.
\begin{Prop}\label{StalksDetectNormal}
Suppose that $(G_i)_{i\in I}$ is a family of groups and that $N\trianglelefteq \prod_{i\in I}G_i$
is a normal subgroup.
Let $X$ be a compact space and let $\iota:I\longrightarrow X$ be any map. 
If $N$ maps under the composite homomorphism
\[
N\hookrightarrow\prod_{i\in I}G_i\xrightarrow{\ \cong\ }\iota_*\mathcal G(X)\longrightarrow\iota_*\mathcal G(p)
\]
onto $\iota_*\mathcal G(p)$, for every 
$p\in X$, then $N$ contains the derived group of $\prod_{i\in I}G_i$.
\end{Prop}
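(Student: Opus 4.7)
The plan is to show $[g,h]\in N$ for arbitrary $g,h\in G:=\prod_{i\in I}G_i$, which is exactly what "$N$ contains the derived group of $\prod_{i\in I}G_i$" amounts to. Throughout, write $G_S:=\prod_{i\in S}G_i\subseteq G$ for a subset $S\subseteq I$.

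First I extract local witnesses from the hypothesis. For each $p\in X$, surjectivity of $N\to\iota_*\mathcal G(p)$ says that $g$ and some $n_p\in N$ have equal image in the stalk; by the definition of the direct limit this produces an open $W_p\ni p$ with $(n_p)_i=g_i$ for all $i\in\iota^{-1}(W_p)$. Applying the same to $h$ and intersecting the two neighborhoods yields a common $W_p$ and $m_p\in N$ with $(m_p)_i=h_i$ on $\iota^{-1}(W_p)$. The commutator $c_p:=[n_p,m_p]$ then lies in $N$ (since $N$ is a subgroup containing both $n_p$ and $m_p$) and agrees coordinatewise with $[g,h]$ on $\iota^{-1}(W_p)$.

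Next I use compactness of $X$ to extract a finite subcover $W_1,\ldots,W_k$, producing elements $c_1,\ldots,c_k\in N$ whose "good sets" $\iota^{-1}(W_j)$ cover $I$. A standard greedy refinement yields a disjoint partition $I=J_1\sqcup\cdots\sqcup J_k$ with $J_j\subseteq\iota^{-1}(W_j)$. The subgroups $G_{J_j}$ pairwise commute in $G$ (disjoint supports), hence $g=\prod_j g^{(j)}$ and $h=\prod_j h^{(j)}$ with $g^{(j)},h^{(j)}\in G_{J_j}$, and $[g,h]=\prod_j[g^{(j)},h^{(j)}]$ as a commuting product. The point is that each factor $[g^{(j)},h^{(j)}]$ coincides with the restriction of $c_j\in N$ to the coordinates in $J_j$.

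The final step is the combining. Writing $c_j=[g^{(j)},h^{(j)}]\cdot t_j$ with $t_j\in G_{I\setminus J_j}$ the tail of $c_j$ outside $J_j$, proving $[g,h]\in N$ reduces to rearranging $\prod_j c_j\in N$ so that the tails $t_j$ cancel or are absorbed into $N$, leaving the commuting product $\prod_j[g^{(j)},h^{(j)}]=[g,h]$. The necessary manipulations rely on the normality of $N$ in $G$ and the standard commutator identities $[ab,c]={}^a[b,c]\,[a,c]$ and $[a,bc]=[a,b]\,{}^b[a,c]$, together with the commuting of the factors $[g^{(j)},h^{(j)}]$. I expect this combining step to be the main obstacle, since a priori the tails $t_j$ carry arbitrary components on $I\setminus J_j$; making the argument go through will require either a careful use of the freedom in the choice of the local witnesses $n_p,m_p$, or a reformulation in the quotient $G/N$ exploiting that every element of $G/N$ has a representative of "small support" near each $p\in X$.
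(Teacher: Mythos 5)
Your reduction to a finite cover and disjoint partition $I=J_1\sqcup\cdots\sqcup J_k$ is exactly right and matches the paper's opening moves, but the choice of \emph{both} local witnesses inside $N$ is what creates the tail problem you correctly flag, and that problem does not resolve: the components of $c_j=[n_j,m_j]$ outside $J_j$ are genuinely uncontrolled, and there is no reason for the tails $t_j$ to cancel in the product $\prod_j c_j$. The commutator identities do not help because nothing constrains $t_j$ to be a commutator or to lie in $N\cap G_{I\setminus J_j}$.

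The fix is the one you half-suspect under ``careful use of the freedom in the choice of the local witnesses.'' Only take $n_j\in N$ agreeing with $g$ on $\iota^{-1}(U_j)$, and do \emph{not} ask for a second witness in $N$ for $h$. Instead define $m_j\in G$ by $(m_j)_i=h_i$ for $i\in J_j$ and $(m_j)_i=1$ otherwise, i.e.\ truncate $h$ to the block $J_j$. Then $m_j$ need not lie in $N$, but normality of $N$ gives
\[
[n_j,m_j]\;=\;n_j\,(m_j\,n_j^{-1}\,m_j^{-1})\;\in\;N,
\]
and, crucially, because $m_j$ is trivial off $J_j$, the commutator $[n_j,m_j]$ has support contained in $J_j$: its $i$-th coordinate is $[g_i,h_i]$ for $i\in J_j$ (using $J_j\subseteq\iota^{-1}(U_j)$ so $(n_j)_i=g_i$ there) and $1$ for $i\notin J_j$. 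There are no tails, and the blocks commute, so $[g,h]=\prod_j[n_j,m_j]\in N$. This is precisely how the paper's proof runs. Your extra step of extracting a witness $m_p\in N$ for $h$ from the stalk hypothesis was both unnecessary and the source of the obstruction.
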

\begin{proof}
We identify $\prod_{i\in I}G_i$ with $\iota_*\mathcal G(X)$.
Assuming that $N$ surjects onto all stalks $\iota_*\mathcal G(p)$, and
given any two elements $g,h\in\iota_*\mathcal G(X)$, we will show that $N$ contains $[g,h]$.

Let $p\in X$. Since $N$ maps under $pr_p^X$ onto $\iota_*\mathcal G(p)$,
there is an open neighborhood $U_p\subseteq X$ of $p$ and an element
$n_p\in N$ such that $pr^X_{U_p}(n_p)=pr^X_{U_p}(g)$.
Since $X$ is compact, we can cover $X$ by finitely many such sets
$U_{p_1},\ldots,U_{p_r}$. For $s=1,\ldots,r$, put $n_s=n_{p_s}$, and put
\[I_s=\iota^{-1}(U_{p_s})\quad\text{and}\quad
I'_s=I_s\setminus(I_1\cup\cdots\cup I_{s-1}).
\]
Then 
\[
I=I'_1\dot\cup\cdots\dot\cup I'_r\quad\text{and}\quad \iota(I_s)\subseteq U_{p_s}.
\]
For $s=1,\ldots,r$ we define elements $m_s\in\prod_{i\in I}G_i$ by
\[(m_s)_i=\begin{cases}
       h_i&\text{ if }i\in I'_s\\ 1&\text{ else.}
      \end{cases}
\]
Then
\[
 ([n_s,m_s])_i=\begin{cases}
                ([g,h])_i&\text{ if }i\in I_s'\\
                1&\text{ else.}
               \end{cases}
\]
Therefore $[g,h]=[n_1,m_1]\cdots[n_r,m_r]\in N$.
\end{proof}
In a completely analogous way, we have the following result
\begin{Prop}\label{StalksDetectPerfect}
Let $X$ be a compact space, let $(G_i)_{i\in I}$ be a family of groups, and
let $\iota:I\longrightarrow X$ be a map. Then $\prod_{i\in I}G_i$ is perfect if and only
if every stalk $\iota_*\mathcal G(p)$ is perfect.
\end{Prop}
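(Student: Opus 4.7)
The plan is to run the same sheaf-theoretic scheme as in Proposition~\ref{StalksDetectNormal}. The forward implication will be essentially formal; the converse needs a commutator-support argument combined with compactness of $X$, plus the observation that commutators in a direct product are computed coordinatewise.

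First I would dispatch the easy direction. For every open $U\subseteq X$ the restriction
$pr^X_U:\iota_*\mathcal G(X)\longrightarrow\iota_*\mathcal G(U)$ is literally the projection
$\prod_{i\in I}G_i\longrightarrow\prod_{i\in\iota^{-1}(U)}G_i$ and is therefore surjective. By definition of the stalk as a direct limit, every element of $\iota_*\mathcal G(p)$ comes from some $\iota_*\mathcal G(U)$, so the induced map $\iota_*\mathcal G(X)\longrightarrow\iota_*\mathcal G(p)$ is surjective as well. Since a homomorphic image of a perfect group is perfect, the perfectness of $\prod_{i\in I}G_i$ passes to every stalk.

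For the converse I would fix $g\in\iota_*\mathcal G(X)$ and build a commutator decomposition of $g$. For each $p\in X$, perfectness of $\iota_*\mathcal G(p)$ together with the direct-limit description of the stalk produces an open neighborhood $U_p\subseteq X$ of $p$, an integer $k_p\geq 0$, and elements $a_{p,1},b_{p,1},\ldots,a_{p,k_p},b_{p,k_p}\in\iota_*\mathcal G(U_p)$ with
\[
pr^X_{U_p}(g)=\prod_{j=1}^{k_p}[a_{p,j},b_{p,j}]\quad\text{in}\quad\iota_*\mathcal G(U_p).
\]
Extending each $a_{p,j},b_{p,j}$ by $1$ on the complement of $\iota^{-1}(U_p)$ lifts them to $\iota_*\mathcal G(X)$, so the product $c_p=\prod_j[a_{p,j},b_{p,j}]\in\iota_*\mathcal G(X)$ is a product of $k_p$ commutators and satisfies $(c_p)_i=g_i$ for every $i\in\iota^{-1}(U_p)$.

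Next I would use compactness of $X$ to pick a finite subcover $U_{p_1},\ldots,U_{p_r}$ and introduce exactly the disjointification $I=I_1'\dot\cup\cdots\dot\cup I_r'$ with $I_s'\subseteq\iota^{-1}(U_{p_s})$ from the proof of Proposition~\ref{StalksDetectNormal}. For each $s$, replace $a_{p_s,j}$ and $b_{p_s,j}$ by their coordinate truncations $a'_{s,j},b'_{s,j}$ which agree with $a_{p_s,j},b_{p_s,j}$ on $I_s'$ and are $1$ elsewhere. Because commutators in a direct product are formed coordinatewise, the commutator $[a'_{s,j},b'_{s,j}]$ is the analogous truncation of $[a_{p_s,j},b_{p_s,j}]$. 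Hence $g_s=\prod_{j}[a'_{s,j},b'_{s,j}]$ is a product of $k_{p_s}$ commutators in $\iota_*\mathcal G(X)$, it agrees with $g$ on $I_s'$, and it is $1$ elsewhere. Since the $I_s'$ partition $I$, the elements $g_1,\ldots,g_r$ have pairwise disjoint support, hence commute, and $g=g_1\cdots g_r$ is a product of commutators.

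The only real obstacle is the one already used in Proposition~\ref{StalksDetectNormal}: coordinatewise truncation of the factors $a_{p_s,j},b_{p_s,j}$ to $I_s'$ yields actual commutators, not merely elements of the normal closure of commutators. Without this observation the cover-and-partition argument would only place $g$ inside a larger subgroup than the commutator subgroup of $\prod_{i\in I}G_i$.
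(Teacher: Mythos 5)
Your argument is correct and is essentially the paper's proof: the easy direction via flabbiness of the direct-image presheaf, the converse via germ-wise commutator decompositions spread over a finite subcover of $X$, a disjointification $I=I_1'\dot\cup\cdots\dot\cup I_r'$, and coordinatewise truncation of the commutator factors to each $I_s'$. The only cosmetic difference is that you decompose $g$ as a product of the commuting pieces $g_s$ without first normalizing the number of commutators per chart, whereas the paper remarks that $n$ can be chosen uniformly and then writes $g$ as one big double product; both work.
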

\begin{proof}
First of all we note that it is clear from the construction of $\iota_*\mathcal G$ 
that the restriction homomorphisms $pr^U_V$ are always surjective
(such a presheaf is called \emph{flabby}), hence $\mathcal G(X)$ maps onto $\mathcal G(p)$, for every
$p\in X$. The image of a perfect group is perfect, so one implication
is clear.

Now let $g\in\prod_{i\in I}G_i$ and assume that every stalk $\iota_*\mathcal G(p)$ is perfect.
Hence for every $p\in X$ there are elements $a_1,b_1,\ldots,a_n,b_n\in\prod_{i\in I}G_i$
such that $pr_p^X([a_1,b_1]\cdots[a_n,b_n])=pr_P^X(g)$. Again, this relation holds then in some open
neighborhood $U_p$ of $p$. Now $X$ is covered by finitely many such sets $U_1=U_{p_1},\ldots, U_m=U_{p_m}$,
where \[pr_{U_s}^X(g)=pr_{U_s}^X[a_{s,1},b_{s,1}]\cdots[a_{s,n},b_{s,n}].\]
The number $n$ can be chosen uniformly.
By a similar argument as in the proof of Proposition~\ref{StalksDetectNormal}, there is a disjoint
decomposition $I=I_1'\cup\cdots\cup I_m'$ with $\iota(I'_s)\subseteq U_s$.
We put
\[ 
\widetilde{(a_{s,t})}_i=\begin{cases} (a_{s,t})_i&\text{ if }i\in I'_s\\ 1&\text{ else} \end{cases}
\quad \text{ and }\quad 
\widetilde{(b_{s,t})}_i=\begin{cases} (b_{s,t})_i&\text{ if }i\in I'_s\\ 1&\text{ else} \end{cases}.
\]
Then $g=[\widetilde{a_{1,1}},\widetilde{b_{1,1}}]\cdots[\widetilde{a_{m,n}},\widetilde{b_{m,n}}]$.
\end{proof}
Now we look at specific choices for $X$.
There are (a least) three obvious candidates.
\begin{Ex}[The trivial case]
If $X=\{p\}$ is a singleton and $\iota$ is the constant map, then $\iota_*\mathcal G(p)=\prod_{i\in I}G_i$.
Obviously, the Propositions~\ref{StalksDetectNormal} and \ref{StalksDetectPerfect} tell us nothing new.
\end{Ex}
\begin{Ex}[The cofinite case]
If $X=\hat I =I\cup\{\infty\}$ is the Alexandrov compactification of $I$ and $\iota:I\longrightarrow \hat I$ is the inclusion,
then $\mathrm{ker}(pr^{\hat I}_\infty)$ is the restricted product
\[
 \prod_{i\in I}^{res}G_i=\{(g_i)_{i\in I}\mid g_i=1\text{ for almost all }i\}\trianglelefteq\prod_{i\in I}G_i.
\]
Thus $\prod_{i\in I}G_i$ is perfect if and only if each $G_i$ is perfect, and if in addition
$\prod_{i\in I}G_i\big/\prod_{i\in I}^{res}G_i$ is perfect (compare this with Lemma \ref{PerfectLemma}).

If $\prod_{i\in I}G_i$ is perfect and if $N\trianglelefteq \prod_{i\in I}G_i$ is a proper normal subgroup
with $pr_i^X(N)=G_i$ for all $i\in I$ (an example of such a group is $N=\prod_{i\in I}^{res}G_i$),
then $N\big(\prod_{i\in I}^{res}G_i\big)\neq\prod_{i\in I}G_i$.
\end{Ex}
\begin{Ex}[The ultraproduct case]
The real case of interest is however when $X=\beta I$ is the \v Cech--Stone compactification
of $I$, with the standard inclusion $\iota:I\longrightarrow\beta I$.
Recall that $\beta I$ can be identified with the set of all ultrafilters on $I$.
The points $j\in I$ correspond to the principal ultrafilters via
$j\mapsto\{J\subseteq I\mid j\in J\}$, whereas the 'new' points
in $\beta I$ correspond to the free ultrafilters\footnote{also called non-principal ultrafilters} on $I$. 
For $J\subseteq I$ put
$J^*=\{\mu\in\beta I\mid J\in\mu\}$. Then $\{J^*\mid J\subseteq I\}$ is a basis
for the topology on $\beta I$.
Given $\mu\in\beta I$, the set $\{J^*\mid J\in\mu\}$ is a neighborhood basis of $\mu$.
It follows that $\iota_*\mathcal G(\mu)=G_j$ if $\mu$ is the principal
ultrafilter generated by $j$. But if $\mu$ is a free ultrafilter, then 
$\iota_*\mathcal G(\mu)$
is the \emph{ultraproduct} (see~\cite[Ch.~5]{BS})
\[
\iota_*\mathcal G(\mu)=\prod_{i\in I} G_i\big/\mu,
\]
where we identify two sequences $(g_i)_{i\in I}$, $(h_i)_{i\in I}$ if
$\{j\mid\ g_j=h_j\}\in\mu$. 
\footnote{One may think of the elements of $\mu$ as sets of measure $1$. Thus two
sequences are considered to be equivalent if they differ only on a set of measure $0$.}
We write 
\[pr_\mu=pr_\mu^{\beta I}:\prod_{i\in I}G_i\longrightarrow\prod_{i\in I}G_i\big/\mu\] 
for the corresponding projection map.

By Proposition~\ref{StalksDetectPerfect}, the product
$\prod_{i\in I}G_i$ is perfect if and only if each $G_i$ is perfect and if
for every free ultrafilter $\mu$ on $I$, the ultraproduct $\prod_{i\in I} G_i\big/\mu$ is perfect.

If the product $\prod_{i\in I}G_i$ is perfect and if $N\trianglelefteq \prod_{i\in I}G_i$ is a proper
normal subgroup, then by Proposition~\ref{StalksDetectNormal} either there exists 
an index $j\in I$ with $pr_j(N)\neq G_j$, or there exists a free ultrafilter $\mu$
such that the image of $N$ in $\prod_{i\in I} G_i\big/\mu$ is different from
$\prod_{i\in I}G_i\big/\mu$.
\end{Ex}
We record this last result for later use.
\begin{Prop}\label{UltraProductProp}
Let $(G_i)_{i\in I}$ be a family of groups. Suppose that the product
$\prod_{i\in I}G_i$ is perfect and that $N\trianglelefteq \prod_{i\in I} G_i$ is a proper normal
subgroup. Then either there exists an index $j\in I$ with $pr_j(N)\neq G_j$
or there exists a free ultrafilter $\mu$ on $I$ such that $pr_\mu(N)\neq\prod_{i\in I} G_i\big/\mu$.
\qed
\end{Prop}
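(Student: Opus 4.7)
The plan is to apply Proposition~\ref{StalksDetectNormal} to the compact space $X=\beta I$ together with the canonical inclusion $\iota:I\longrightarrow\beta I$. As spelled out in the preceding ultraproduct example, the stalks of the direct image presheaf $\iota_*\mathcal G$ are exactly the factors $G_j$ at principal ultrafilters $j\in I$ and the ultraproducts $\prod_{i\in I}G_i\big/\mu$ at free ultrafilters $\mu$. So the two classes of projections appearing in the statement, namely $pr_j$ and $pr_\mu$, together account for all restriction maps $pr_p^{\beta I}:\iota_*\mathcal G(\beta I)\longrightarrow\iota_*\mathcal G(p)$ as $p$ ranges over $\beta I$.

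I would then argue by contradiction. Suppose $pr_j(N)=G_j$ for every $j\in I$ and $pr_\mu(N)=\prod_{i\in I}G_i\big/\mu$ for every free ultrafilter $\mu$ on $I$. Then $N$ surjects onto every stalk $\iota_*\mathcal G(p)$, $p\in\beta I$. Since $\beta I$ is compact, Proposition~\ref{StalksDetectNormal} applies and yields that $N$ contains the derived subgroup of $\prod_{i\in I}G_i$. But the product is perfect by hypothesis, so its derived subgroup is the whole product, forcing $N=\prod_{i\in I}G_i$ and contradicting the assumption that $N$ is proper.

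I do not expect any real obstacle here: the proposition is essentially a repackaging of the ultraproduct example through the general stalk-detection statement. The only point one has to verify once more is the identification of the stalks of $\iota_*\mathcal G$ with either $G_j$ or with the ultraproduct $\prod_{i\in I}G_i\big/\mu$, and this in turn follows from the fact that the sets $J^*=\{\nu\in\beta I\mid J\in\nu\}$ with $J\in\mu$ form a neighborhood basis of $\mu$ in $\beta I$; the direct limit defining $\iota_*\mathcal G(\mu)$ then identifies two sequences precisely when they agree on some $J\in\mu$, which is the defining equivalence of the ultraproduct.
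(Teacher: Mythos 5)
Your argument is exactly the paper's: Proposition~\ref{UltraProductProp} is obtained by specializing Proposition~\ref{StalksDetectNormal} to $X=\beta I$, using the identification of the stalks of $\iota_*\mathcal G$ with $G_j$ at principal ultrafilters and with the ultraproducts $\prod_{i\in I}G_i\big/\mu$ at free ultrafilters, and then invoking perfectness to turn ``contains the derived group'' into ``equals the whole product.'' There is nothing to add; the proposal is correct and matches the intended proof.
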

Products and ultraproducts of finite simple groups and their normal subgroups have been
extensively studied. See e.g. \cite{ST1}, \cite{ST2}, \cite{Point} and the literature
quoted there. The normal subgroup
lattice of ultraproducts of compact simple Lie groups is studied in detail in \cite{StolzThom}.

\section{Normal generation of compact simple Lie groups}

We call a Lie group $G$ \emph{almost simple} if it is connected
and if its Lie algebra is simple.
Let $G$ be a compact almost simple Lie group.
Then the center $\mathrm{Cen}(G)$ is finite
and $G/\mathrm{Cen}(G)$ is simple as an abstract group by Theorem~\ref{NormalInSemisimple}.
Moreover, the conjugacy class 
\[C(a)=\{gag^{-1}\mid g\in G\}\]
of every non-central element $a\in G$ generates $G$.
We will now re-prove this in a completely different way. The new approach 
has the advantage that it says, in a quantitative way, how fast a conjugacy
class generates the whole group.
\subsection{Some observations}
Suppose that $G=\mathrm{SU}(n)$. The conjugacy class of a matrix $a\in \mathrm{SU}(n)$ is
determined by its spectrum (with its multiplicities). Given such a
spectral datum, the task is therefore to write an arbitrary element
$g\in \mathrm{SU}(n)$ as a product $g=a_1\cdots a_m$ of special unitary matrices 
$a_1,\ldots,a_m$ with a prescribed spectrum. 
\footnote{This problem is ultimately related to Horn
inequalities, Schubert calculus and buildings, see e.g.~\cite{Woodward}, \cite{JM} and \cite{KLM}.}

This viewpoint shows also that there are lower bounds on the number of matrices 
needed. Consider for example the group $G=\mathrm{SO}(2k+1)$, for $k\geq 1$. Every element in $g\in\mathrm{SO}(2k+1)$
can be written as a product of an even number of reflections.
Let $g\in\mathrm{SO}(2k+1)$ be a matrix with exactly one real eigenspace $E\subseteq\mathbb R^{2k+1}$, 
of dimension $\dim(E)=1$.
Then $g$ cannot be written as a product of less than $2k$ be reflections. 
To see this, note that every reflection fixes a hyperplane.
The intersection of less than $2k$ hyperplanes in $\mathbb R^{2k+1}$
has dimension at least~$2$, hence a product 
of less than $2k$ reflections fixes a $2$-dimensional subspace pointwise. The reflections
themselves are not in $\mathrm{SO}(2k+1)$, but their negatives are.
Hence we have here a lower bound on the number of multiplications of a fixed conjugacy class
needed, which grows with the rank of the group.

\smallskip
We will prove in Theorem~\ref{KThm}
that there exists a rational real function $f$, and 
on every simply connected almost simple compact Lie group $G$ a nonnegative bounded continuous real \
class function $\hat\sigma$
with the following property. If $\hat\sigma(a)>\theta$, then every element in
$G$ is a product of less than $f(\theta)$ conjugates of $a,a^{-1}$.
This is a first-order property of compact Lie groups, which extends by \L os' Theorem 
to ultraproducts. Combining Proposition~\ref{UltraProductProp} with the Structure Theorem for compact 
connected groups, we obtain then a proof of Theorem~\ref{N-S1}.

\smallskip
In what follows, we put for a nonnegative real number $r$
\[
\lceil r\rceil=\min\{k\in\mathbb N\mid k\geq r\}
\]
and we note that
\[
 \lceil r\rceil<r+1.
\]
Our whole approach follows closely the reasoning in Nikolov--Segal \cite{NS}, pp.~582--595,
combined with a tweak introduced in \cite{StolzThom}.
However we avoid the notion of \emph{asymptotic cones of metric spaces} (the \emph{ultralimits} in \emph{loc.cit.})
altogether. Instead we work bare hand with ultraproducts.

\subsection{Some elementary calculations.}\label{Elementary}
We first consider the case of the almost simple matrix group
\[
 G=\mathrm{SU}(2)=\left.\left\{ 
 \left(\begin{smallmatrix}  c & s\\-\bar s & \bar c \end{smallmatrix}\right)\in\mathbb C^{2\times2}\right| c\bar c+s\bar s=1\right\}.
\]
For $c=\pm1$ we obtain the $2\times2$ identity matrix $\bf1$ and its negative $-\bf1$, respectively.
Note that $\mathrm{Cen}(\mathrm{SU}(2))=\{\pm{\bf1}\}$.
We identify $\mathrm{SU}(2)$ with the round $3$-sphere in $\mathbb C^2\cong\mathbb R^4$, and we
endow it with the unique bi-invariant Riemannian metric $d$ of diameter $\pi$
(so $d$ is the spherical or angular metric).
We fix the maximal torus 
\[
  T=\left.\left\{\left(\begin{smallmatrix}  c & \\ & \bar c \end{smallmatrix}\right)\in\mathbb C^{2\times2}\right| c\bar c=1\right\}
  \subseteq\mathrm{SU}(2)
\]
and we note that every element in $\mathrm{SU}(2)$ is conjugate to an element in $T$.

Now we use some elementary spherical geometry.
The conjugation action of $G$ on itself is by rotations around the axis passing through $\pm\bf1$.
The conjugacy class $C(a)=\{gag^{-1}\mid g\in G\}$ of an element $a\in G\setminus\{\pm{\bf1}\}$ is thus a round $2$-sphere
of spherical radius $\theta$, where $\frac{\theta}{2}=\min\{d({\bf1},a),d(-{\bf1},a)\}\in[0,\frac{\pi}{2}]$.
Then $C(a)a^{-1}$ is a round sphere of the same shape, but passing through the identity
element ${\bf1}\in G$. The set of all conjugates of the set $C(a)a^{-1}$, which coincides obviously with $C(a)C(a^{-1})$,
is thus the closed metric ball \[C(a)C(a^{-1})=B_{\theta}({\bf1})\] consisting of all $b\in G$ whose spherical distance from 
$\bf1$ is at most $\theta$.
Next we note that we have for every $\theta\in[0,\pi]$ that 
\[G=\underbrace{B_{\theta}({\bf1})\cdots B_{\theta}({\bf1})}_{k\text{ factors}}\text{ whenever }k\theta\geq\pi.\]
\begin{quote}
\emph{In particular we have shown that every element in $\mathrm{SU}(2)$ is a product not more than
$2\lceil\frac{\pi}{\theta}\rceil$ conjugates of $a$ and $a^{-1}$.}
\end{quote}
The quantity $\theta$ can be defined in a more natural way. We consider the \emph{root} (or character) 
\[
\delta:T\longrightarrow\mathrm{U}(1),\quad 
\delta\left(\begin{smallmatrix}  c & \\ & \bar c \end{smallmatrix}\right)= c^2.
\] 
For $c\in\mathrm{U}(1)$ we denote by $\ell(c)\in[0,\pi]$ the spherical (or angular) distance 
between $1$ and $c$ on the unit circle. 
Thus $\ell(ab)\leq\ell(a)+\ell(b)$ and $\ell(\bar a)=\ell(a)$ hold for all $a,b\in\mathrm{U}(1)$.
Then we have for all $a\in T$ that the conjugacy class $C(a)$ is a sphere of diameter
\[
 \theta=\ell(\delta(a))
\]
(where a 'sphere of diameter $0$' is meant to be a point).
Using these elementary observations, we have the following 'bootstrap Lemma'.
\begin{Lem}\label{ElementaryLemma}
Let $H=H_0\cdot H_1\cdots H_s$ be a compact connected Lie group which is a central product of
groups $H_i\cong\mathrm{SU}(2)$, for $1\leq i\leq s$, and a torus $H_0$.
For each factor $H_i\cong\mathrm{SU}(2)$ 
with $1\leq i\leq s$ we fix a torus $T_i\subseteq H_i$ and a root $\delta_i:T_i\longrightarrow\mathrm{U}(1)$
exactly as in \ref{Elementary}. Suppose that $0<\theta\leq\pi$ and that
$h\in H$ is an element such that $\ell(\delta_i(h))\geq\theta$ holds for all $1\leq i\leq s$.
Then every element in the subgroup $H_1\cdot H_2\cdots H_s\subseteq H$ is a product of
at most $2\lceil \frac{\pi}{\theta}\rceil$ conjugates of $h$ and $h^{-1}$.
\end{Lem}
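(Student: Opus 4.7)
The plan is to exploit the fact that distinct factors $H_i$ in the central product commute pointwise, so that a single expression of the form $ghg^{-1}\cdot kh^{-1}k^{-1}$ can be made to produce an arbitrary element of the spherical ball $B_\theta(\mathbf{1}_i)$ in \emph{each} $H_i$ simultaneously. Once this is established, the lemma will follow by $m$-fold multiplication from the $\mathrm{SU}(2)$ covering fact of~\ref{Elementary}.

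First, replace $h$ by a conjugate lying in the maximal torus $T=T_0 T_1\cdots T_s$ of $H$. Both conclusion and hypothesis of the lemma are invariant under this step: the set of conjugates of $h$ is invariant by definition, and $\ell\circ\delta_i$ descends to a well-defined class function on $H$ because the Weyl group acts on the root $\delta_i$ by permutations and inversion, while $\ell$ is invariant under inversion. Write now $h=h_0 h_1\cdots h_s$ with $h_0\in T_0$ central and $h_i\in T_i$, and set $\theta_i=\ell(\delta_i(h_i))\geq\theta$.

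The main computation is the following. For arbitrary $g_1,\ldots,g_s$ and $k_1,\ldots,k_s$ with $g_i,k_i\in H_i$, put $g=g_1\cdots g_s$ and $k=k_1\cdots k_s$. Using that distinct $H_i$'s commute pointwise and that $h_0$ is central, all terms reorder to give
\[
 ghg^{-1}\cdot kh^{-1}k^{-1}=\prod_{i=1}^s\bigl(g_i h_i g_i^{-1}\cdot k_i h_i^{-1} k_i^{-1}\bigr).
\]
As $g_i$ and $k_i$ independently sweep $H_i$, the $i$-th factor sweeps $C_{H_i}(h_i)\cdot C_{H_i}(h_i^{-1})$, which by the analysis in~\ref{Elementary} is the closed metric ball $B_{\theta_i}(\mathbf{1}_i)\subseteq H_i$, and this contains $B_\theta(\mathbf{1}_i)$. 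Hence
\[
 C(h)\cdot C(h^{-1})\ \supseteq\ B_\theta(\mathbf{1}_1)\cdot B_\theta(\mathbf{1}_2)\cdots B_\theta(\mathbf{1}_s).
\]

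Set $m=\lceil\pi/\theta\rceil$, so that $m\theta\geq\pi$. The $\mathrm{SU}(2)$ covering fact of~\ref{Elementary} yields $B_\theta(\mathbf{1}_i)^m=H_i$ for every $i$. Taking the $m$-th power of the inclusion above and again rearranging factors using commutativity of distinct $H_i$'s, one concludes
\[
 \bigl(C(h)\cdot C(h^{-1})\bigr)^m\ \supseteq\ \prod_{i=1}^s B_\theta(\mathbf{1}_i)^m\ =\ H_1 H_2\cdots H_s,
\]
so every element of $H_1\cdots H_s$ is a product of $m$ elements of $C(h)\cdot C(h^{-1})$, i.e., of $2m=2\lceil\pi/\theta\rceil$ conjugates of $h$ and $h^{-1}$. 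The only real obstacle is conceptual: the conjugators $g_i,k_i$ must be chosen \emph{independently} in each simple factor, which is exactly what the central-product hypothesis allows; once this is seen, the problem collapses to the rank-$1$ spherical geometry already worked out in~\ref{Elementary}.
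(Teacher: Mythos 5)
Your proof is correct and follows essentially the same route as the paper: decompose $h=h_0h_1\cdots h_s$ along the central product, use the commuting of distinct $H_i$ and centrality of $h_0$ to factor $C(h)C(h^{-1})$ as the product of the sets $C_{H_i}(h_i)\,C_{H_i}(h_i^{-1})=B_{\theta_i}(\mathbf 1_i)\supseteq B_\theta(\mathbf 1_i)$, and then invoke the $\mathrm{SU}(2)$ covering bound from~\ref{Elementary}. The only difference is your opening step of conjugating $h$ into the maximal torus $T=T_0T_1\cdots T_s$, which is harmless but not needed: the decomposition $h=h_0h_1\cdots h_s$ with $h_i\in H_i$ already exists for any $h\in H$ directly from the central-product structure, and the analysis of~\ref{Elementary} applies to $C_{H_i}(h_i)C_{H_i}(h_i^{-1})$ with $h_i\in H_i$ arbitrary.
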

\begin{proof}
We write $h$ as a product $h=h_0h_1\cdots h_s$, with $h_i\in H_i$.
Then $C(h)C(h^{-1})$ is the product of the $s$ commuting sets $(C(h_i)C(h_i^{-1}))$,
with $1\leq i\leq s$. There appears no more factor in $H_0$, because $H_0$ is in the center
of $H$ and because $h_0$ and $h_0^{-1}$ cancel out.
The claim follows now from the observations in \ref{Elementary}, applied to each $H_i$ individually.
\end{proof}
Now we turn to root systems in
compact almost simple Lie  groups in general. The books \cite{Adams}, \cite{BtD}, \cite{HilgertNeeb} and \cite{HMCompact}
are excellent references for the facts that we need.
\subsection{Roots in a compact Lie group.}\label{Roots1}
Suppose that $G$ is a compact simply connected almost simple Lie group.
We fix a maximal torus $T\subseteq G$ and its character group $X^*(T)=\mathrm{Hom}(T,\mathrm{U}(1))$.
Associated to $T$ we have the root system $\Phi\subseteq X^*(T)$.
The special case of $G=\mathrm{SU}(2)$, where $\Phi=\{\pm\delta\}$, was described in \ref{Elementary}.

Let $\Delta\subseteq\Phi$ be a system of simple roots, $\Delta=\{\delta_1,\cdots,\delta_r\}$.
Then $r=\dim(T)$ is the \emph{rank} of $G$. There is a canonical isomorphism
$V=X(T)^*\otimes\RR\cong\mathrm{Hom}_{\mathbb R}(\mathrm{Lie}(T),\mathbb R)$, and 
$\Delta$ is a basis of this real vector space. There is also a canonical inner product
on $V$, and $\Phi$ is a reduced irreducible root system in the vector space $V$.
The reflection group generated by $\Phi$ is the Weyl group $W=W(\Phi)$.
The Weyl group is canonically isomorphic to $N/T$, where $N=\mathrm{Nor}_G(T)$ 
is the normalizer of $T$ in $G$.

There is also the cocharacter group $X_*(T)=\mathrm{Hom}(\mathrm{U}(1),T)$,
and a natural isomorphism $X_*(T)\otimes\mathbb R\cong\mathrm{Lie}(T)$.
Associated to every root $\alpha$ there is a coroot $\alpha^\vee$, with
$\alpha(\alpha^\vee(c))=c^2$. The action of $W(\Phi)$ on $\mathrm{Lie}(T)$ coincides
with the adjoint action of $N/T$.

\subsection{Some more definitions}
For a subset $\Delta'\subseteq\Delta$ we denote by $T_{\Delta'}$ the $\#\Delta'$-dimensional
subtorus
spanned by the coroots $\{\delta^\vee\mid\delta\in\Delta'\}$, i.e.
\[\textstyle
 T_{\Delta'}=\prod\{\delta^\vee(c)\mid\delta\in\Delta'\text{ and }c\in\mathrm{U}(1)\}\subseteq T.
\]
We now define real-valued continuous functions $\sigma$ and $\hat\sigma$ on $T$ by
\[\textstyle
 \sigma(h)=\frac{1}{r}\sum_{i=1}^r\ell(\delta_i(h))
\]
and
\[
\hat\sigma(h)=\max\{\sigma(w(h))\mid w\in W\}.
\]
\begin{Lem}\label{PropertiesOfsigma}
The function $\sigma:T\longrightarrow\mathbb R$ has the following properties.
\begin{enumerate}[(i)]
 \item $\sigma(h)=0$ holds if and only if $h\in\mathrm{Cen}(G)$.
 \item $\sigma(h)=\sigma(h^{-1})$.
 \item $\sigma(hh')\leq\sigma(h)+\sigma(h')$.
 \item $\sigma(T)=[0,\pi]$.
\end{enumerate}
The function $\hat\sigma$ has the same properties (i)--(iv). In addition, $\hat\sigma$
extends uniquely to a continuous class function on $G$.
\end{Lem}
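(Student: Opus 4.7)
The plan is to handle $\sigma$ first, then derive the analogous properties for $\hat\sigma$, and finally dispatch the extension claim using the standard identification $T/W \cong G/\mathrm{conj}$.

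For $\sigma$: property~(ii) is immediate from $\ell(\bar c)=\ell(c)$ applied to $\delta_i(h^{-1})=\overline{\delta_i(h)}$, and~(iii) is immediate from subadditivity of $\ell$ on $\mathrm{U}(1)$ applied to $\delta_i(hh')=\delta_i(h)\delta_i(h')$. For~(i), $\sigma(h)=0$ forces $\delta_i(h)=1$ for $i=1,\dots,r$; since $\Delta$ is a $\mathbb Z$-basis of the root lattice this yields $\alpha(h)=1$ for every $\alpha\in\Phi$, and for a simply connected compact $G$ this characterizes $\mathrm{Cen}(G)$. The converse is clear. For~(iv), continuity of $\sigma$ and connectedness of $T$ give a connected image containing $0=\sigma(\mathbf 1)$; I will show $\pi$ is also attained. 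The smooth homomorphism $T\to\mathrm{U}(1)^r$, $h\mapsto(\delta_1(h),\dots,\delta_r(h))$, has differential the matrix of simple roots, which is an $\mathbb R$-linear isomorphism, so the map is surjective. Hence some $h\in T$ satisfies $\delta_i(h)=-1$ for every $i$, giving $\sigma(h)=\pi$.

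For $\hat\sigma$: First note that $\hat\sigma$ is $W$-invariant, since replacing $h$ by $w_0(h)$ merely permutes the set $\{w\cdot h:w\in W\}$. Properties~(ii) and~(iv) are then immediate from the corresponding statements for $\sigma$, together with $\hat\sigma\geq\sigma$ pointwise and $\hat\sigma\leq\pi$. For~(iii), I observe that for every $w\in W$
\[
\sigma(w(hh'))=\sigma(w(h)\,w(h'))\leq\sigma(w(h))+\sigma(w(h'))\leq\hat\sigma(h)+\hat\sigma(h'),
\]
and taking the max over $w$ gives the claim. For~(i), if $h\in\mathrm{Cen}(G)$ then $w(h)=h$ for every $w\in W$ (the Weyl group acts by conjugation in $G$), so $\hat\sigma(h)=\sigma(h)=0$; conversely $\hat\sigma(h)=0$ forces $\sigma(h)=0$, hence $h\in\mathrm{Cen}(G)$ by~(i) for $\sigma$.

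For the extension: $\hat\sigma$ is a continuous $W$-invariant function on $T$. Every conjugacy class in the compact connected group $G$ meets $T$, and two elements of $T$ are $G$-conjugate if and only if they are $W$-conjugate (both being standard consequences of maximal torus theory, cf.~\cite{BtD}). The induced continuous bijection $T/W\longrightarrow G/\mathrm{conj}$ between compact Hausdorff spaces is a homeomorphism, so pulling back $\hat\sigma$ through this identification produces a unique continuous class function on $G$ restricting to $\hat\sigma$ on $T$. The main obstacle is simply knowing the above quotient identification; once that is invoked the rest is formal, and I expect the verification of~(i) to be the only place where simple connectedness of $G$ is genuinely needed.
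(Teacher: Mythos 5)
Your proposal is correct and follows essentially the same route as the paper: (ii)–(iii) from the metric properties of $\ell$, (iv) via surjectivity of the product-of-simple-roots map $P:T\to\mathrm{U}(1)^r$, (i) via $\ker P=\mathrm{Cen}(G)$, and the extension via the standard identification of continuous $W$-invariant functions on $T$ with continuous class functions on $G$. One small remark: simple connectedness is not actually needed in (i) — the set $\{t\in T\mid\alpha(t)=1\text{ for all }\alpha\in\Phi\}$ is $\ker(\mathrm{Ad})\cap T=\mathrm{Cen}(G)$ for any compact connected semisimple $G$ — so your closing speculation that it is "genuinely needed" there is off, though this does not affect the proof.
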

\begin{proof}
Properties (ii) and (iii) are clear from the definition (both for $\sigma$ and for $\hat\sigma$). 
It is also clear from the definition that $\sigma$ and $\hat\sigma$ take their values in $[0,\pi]$.
Both $\sigma$ and $\hat\sigma$ map the identity element in $T$ to $0$.
Consider the Lie group homomorphism 
\[
P:T\longrightarrow\mathrm{U}(1)\times\cdots\times\mathrm{U}(1),\quad
P(h)=(\delta_1(h),\ldots,\delta_r(h)).
\]
This homomorphism has maximal rank $r$ (this can be seen from the coroots $
\delta_i^\vee$---the homomorphism $P$
surjects onto each factor $\mathrm{U}(1)$)
and is therefore surjective, because the target group is connected.
Now $\sigma(h)$ is the distance between $P(h)$ and the identity element,
with respect to the $\frac{1}{r}$-scaled $\ell_1$-product metric on the target group.
Therefore $\sigma$ assumes the value $\pi$. Since $T$ is connected, $\sigma(T)=[0,\pi]$.
It follows that $\hat\sigma(T)=[0,\pi]$, and hence claim (iv) holds.
For claim (i) we note that the kernel of $P$ is precisely the center of $G$,
see~\cite[Def.~4.38 and Prop~5.3]{Adams} or \cite[Ch.~V~Prop.~7.16]{BtD}.
Indeed, the adjoint action of $T$ on $\mathrm{Lie}(G)\otimes_{\mathbb R}\mathbb C$ is precisely given by
the characters, and the kernel of this action is the center of $G$.
Finally, the function
$\hat\sigma$ is by construction invariant under $W=\mathrm{Nor}_G(T)/T$. Therefore it extends uniquely to a 
continuous class function
on~$G$, see~\cite[Ch.~IV~Cor.~2.7]{BtD}. 
\end{proof}
\subsection{The root system, continued}
We keep the notation and the assumptions from \ref{Roots1}.
Corresponding to every simple root $\delta_i\in\Delta$ there is a connected subgroup $H_i$
of type $\mathrm{Lie}(H_i)\cong\mathfrak{su}(2)$, see~\cite[Lem.~12.2.15]{HilgertNeeb}. 
If two simple roots $\delta_i$ and $\delta_j$ are orthogonal, then they are 
\emph{strongly orthogonal}, i.e. $\delta_i\pm\delta_j\not\in\Phi$.
It follows from the commutator relations in the complexified Lie algebra
$\mathrm{Lie}(G)\otimes_{\mathbb R}\mathbb C$
that then the groups $H_i$ and $H_j$ commute, $[H_i,H_j]=1$. 
\begin{Lem}
Since $G$ is simply connected,
the groups $H_i$ are isomorphic to $\mathrm{SU}(2)$ and the root $\delta_i$ restricts
to a root on $T\cap H_i$ as described in \ref{Elementary}.
\end{Lem}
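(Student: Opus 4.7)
My approach is to identify $H_i$ among the two possibilities $\mathrm{SU}(2)$ and $\mathrm{SO}(3)\cong\mathrm{SU}(2)/\{\pm\mathbf{1}\}$, and then to read off the restriction $\delta_i|_{T\cap H_i}$ directly from the defining identity of the coroot.

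First I would observe that $H_i$ is compact connected with $\mathrm{Lie}(H_i)\cong\mathfrak{su}(2)$, so by classification $H_i$ is isomorphic to $\mathrm{SU}(2)$ or to $\mathrm{SO}(3)$. Moreover, the $\mathfrak{su}(2)$-triple spanning $\mathrm{Lie}(H_i)$ has Cartan part $\mathbb{R}\,\delta_i^\vee$, so $\mathrm{Lie}(T)\cap\mathrm{Lie}(H_i)=\mathbb{R}\,\delta_i^\vee$; consequently $T\cap H_i$ is connected, coincides with the image of $\delta_i^\vee:\mathrm{U}(1)\to T$, and is the unique maximal torus of $H_i$. The two options for $H_i$ are then distinguished by the kernel of $\delta_i^\vee$: trivial in the $\mathrm{SU}(2)$ case, and equal to $\{\pm 1\}$ in the $\mathrm{SO}(3)$ case.

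The crux is to rule out the $\mathrm{SO}(3)$ case. For this I would invoke the standard description $\pi_1(G)\cong\Lambda/Q^\vee$, where $\Lambda=\ker(\exp\colon\mathrm{Lie}(T)\to T)$ is the unit lattice of $T$ and $Q^\vee=\mathbb{Z}\delta_1^\vee\oplus\cdots\oplus\mathbb{Z}\delta_r^\vee$ is the coroot lattice (as in \cite{BtD} or \cite{Adams}). Since $G$ is simply connected, $\Lambda=Q^\vee$, and the simple coroots form a $\mathbb{Z}$-basis of $\Lambda$. In particular each $\delta_i^\vee$ is primitive in $\Lambda$, so $\tfrac{1}{2}\delta_i^\vee\notin\Lambda$, which forces $\delta_i^\vee:\mathrm{U}(1)\to T\cap H_i$ to be injective and hence $H_i\cong\mathrm{SU}(2)$.

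For the second assertion, using the isomorphism $\delta_i^\vee:\mathrm{U}(1)\xrightarrow{\cong} T\cap H_i$ to identify $T\cap H_i$ with $\mathrm{U}(1)$, the defining relation $\delta_i(\delta_i^\vee(c))=c^2$ shows that $\delta_i|_{T\cap H_i}$ becomes the squaring map $c\mapsto c^2$, which is precisely the root $\delta$ of $\mathrm{SU}(2)$ described in \ref{Elementary}. The one nontrivial ingredient is the identification $\pi_1(G)\cong\Lambda/Q^\vee$, which is not derived in this paper; in a detailed write-up I would pin down a precise reference for it, while everything else is a short bookkeeping exercise with the root datum.
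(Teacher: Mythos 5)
Your proof is correct and takes essentially the same route as the paper's. The paper also reduces everything to the fact that for simply connected $G$ the kernel of $\exp\colon\mathrm{Lie}(T)\to T$ coincides with the coroot lattice (citing \cite[Prop.~12.4.14]{HilgertNeeb} or \cite[Ch.~V~Thm.~7.1]{BtD}, which are exactly the references you were looking for), whence $\tfrac12\delta_i^\vee$ is not in that kernel, $\delta_i^\vee(-1)\neq 1$, and the corresponding map from $\mathrm{SU}(2)$ is injective; the only cosmetic difference is that the paper phrases this as injectivity of the homomorphism $\rho_{\delta_i}\colon\mathrm{SU}(2)\to G$ from \cite[Prop.~12.4.10]{HilgertNeeb} rather than as ruling out the $\mathrm{SO}(3)$ alternative.
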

\begin{proof}
Let $\Gamma\subseteq\mathrm{Lie}(T)$ denote the subgroup generated by the coroots.
Since $G$ is simply connected, this group coincides with the kernel $I$ of the
homomorphism $\exp:\mathrm{Lie}(T)\longrightarrow T$, see~\cite[Prop.~12.4.14]{HilgertNeeb}
or~\cite[Ch.~V~Thm.~7.1]{BtD}.
By \cite[Prop~12.4.10]{HilgertNeeb}, we have for every simple root $\delta\in\Delta$
a homomorphism $\rho_\delta:\mathrm{SU}(2)\longrightarrow G$
which maps the torus elements $\left(\begin{smallmatrix}  c & \\ & \bar c \end{smallmatrix}\right)$
in $\mathrm{SU}(2)$
into $T$, such that 
$\delta(\rho_\delta(\begin{smallmatrix}  c & \\ & \bar c \end{smallmatrix}))=c^2$.
The composite $c\mapsto\rho_\delta(\begin{smallmatrix}  c & \\ & \bar c \end{smallmatrix}))$
coincides with the coroot $\delta^\vee$. 
Now $\delta^\vee(-1)=\exp(\frac{1}{2}\delta^\vee)\neq 1$, because $\frac{1}{2}\delta^\vee$ is
not in $\Gamma=I$. Therefore $\rho_\delta$ is injective.
\end{proof}

Now we get to the following key observation.
\begin{KLem}[Nikolov--Segal]\label{KLem1}
With the notation and assumptions as in \ref{Roots1}, let 
$G$ be a compact simply connected almost simple Lie group of rank $r$, with maximal
torus $T\subseteq G$. Let $\theta$ be a positive real number, and let
$h\in T$ be an element
with $\sigma(h)\geq\theta>0$. Then there exists a set $\Delta_0\subseteq\Delta$
of pairwise orthogonal simple roots such that the following conditions are
simultaneously satisfied.
\begin{enumerate}[(a)]
 \item $\Delta_0$ has at not less than $\frac{r\theta}{4}$ elements.
 \item Every element $a\in T_{\Delta_0}$ is a product of at most
 $2\lceil\frac{2\pi}{\theta}\rceil$ conjugates of $h$ and $h^{-1}$.
\end{enumerate}
\end{KLem}
\begin{proof}
We put $\Delta_1=\{\delta\in\Delta\mid\ell(\delta(h)\geq\theta/2\}$ and $t=\#\Delta_1$. Now we count
\[\textstyle
 \theta\leq \sigma(h)\leq \frac{1}{r}\big(t+(r-t)\frac{\theta}{2}\big)\leq \frac{t}{r}+\frac{\theta}{2},
\]
whence $\frac{r\theta}{2}\leq t$.

We claim that $\Delta_1$ contains a subset $\Delta_0$ consisting of pairwise orthogonal roots,
with $\frac{t}{2}\leq\#\Delta_1$. We postpone the proof of this claim, which is easy and purely
combinatorial, to \ref{WhatRemains}. We have thus $\#\Delta_0\geq\frac{r\theta}{4}$.

Now we apply Lemma~\ref{ElementaryLemma} 
to the group $H$ generated by $T$ and the $H_i$ with $\delta_i\in\Delta_0$.
We conclude that every element in $T_{\Delta_0}\subseteq H$ is
a product of at most $2\lceil\frac{2\pi}{\theta}\rceil$ conjugates of $h$ and $h^{-1}$.
%
\end{proof}
The last ingredient that we need is the following representation theoretic fact.
\begin{Lem}\label{RepLemma}
Let $\Phi$ be a reduced irreducible root system of rank $r$ in a real $r$-dimensional vector space $V$, let
$\Delta\subseteq\Phi$ be a set of simple roots and let $\Delta_0\subseteq\Delta$
be a nonempty set consisting of $s$ pairwise orthogonal roots. Let $W$ denote the Weyl group of $\Phi$.
Then there exist elements $w_1,\ldots,w_t\in W$ with the following properties.
\begin{enumerate}
 \item $w_1(\Delta_0)\cup\cdots\cup w_t(\Delta_0)$ generates $V$.
 \item $t\leq \frac{2r}{s}+3$.
\end{enumerate}
\end{Lem}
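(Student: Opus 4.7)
The plan is to build the Weyl group elements $w_i$ explicitly as products of commuting simple reflections associated with simple roots in $\Delta\setminus\Delta_0$, and then bound $t$ by a combinatorial partition argument on the Dynkin diagram of $\Phi$. First, take $w_1=\id$, so that $w_1(\Delta_0)=\Delta_0$ spans the $s$-dimensional subspace $V_1=\mathrm{span}(\Delta_0)\subseteq V$, and enumerate $\Delta_0=\{\alpha_1,\ldots,\alpha_s\}$.

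The key computation is as follows. If $B=\{\beta_1,\ldots,\beta_p\}\subseteq\Delta\setminus\Delta_0$ consists of pairwise orthogonal simple roots, the reflections $s_{\beta_j}$ commute, so $w_B:=s_{\beta_1}\cdots s_{\beta_p}\in W$ is independent of the chosen order. Since $s_{\beta_j}(\beta_k)=\beta_k$ for $k\neq j$, one obtains
\[
w_B(\alpha_i)=\alpha_i-\sum_{j=1}^{p}\langle\alpha_i,\beta_j^\vee\rangle\,\beta_j.
\]
Taking the differences $\alpha_i-w_B(\alpha_i)$ shows
\[
\mathrm{span}\bigl(w_B(\Delta_0)\cup\Delta_0\bigr)=V_1+\RR\beta_1+\cdots+\RR\beta_p,
\]
provided the $s\times p$ incidence matrix $C(B):=\bigl(\langle\alpha_i,\beta_j^\vee\rangle\bigr)_{i,j}$ has full column rank $p$. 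Thus a single Weyl element $w_B$ contributes $p$ new linearly independent directions, and necessarily $p\leq s$.

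With this in hand, I would partition $\Delta\setminus\Delta_0$ into blocks $B_1,\ldots,B_{t-1}$, each consisting of pairwise orthogonal simple roots with full-column-rank incidence to $\Delta_0$, and set $w_{k+1}:=w_{B_k}$. Then $\bigcup_{k=1}^{t}w_k(\Delta_0)$ spans $V_1+\sum_{\beta\in\Delta\setminus\Delta_0}\RR\beta=\mathrm{span}(\Delta)=V$, which verifies~(1). For~(2), one exploits that the Dynkin diagram of $\Phi$ is a tree of maximum degree at most $3$, hence bipartite: a $2$-colouring yields colour classes of pairwise orthogonal simple roots, and the degree bound lets the blocks be chosen of size close to $s$. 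A careful count should then produce such a partition with at most $\lceil 2(r-s)/s\rceil+2$ blocks, yielding $t\leq 2r/s+3$.

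The main obstacle is the combinatorial packing step: ensuring that the blocks $B_k$ are large (close to $s$ in size) while the rank condition on $C(B_k)$ is preserved. A clean uniform argument ought to be possible from the tree structure together with the bounded-degree property, but if not, a case-by-case verification over the finite list of irreducible Dynkin types $A_r$, $B_r$, $C_r$, $D_r$, $E_{6,7,8}$, $F_4$, $G_2$ would also suffice.
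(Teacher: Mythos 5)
Your key computation is correct: for a block $B=\{\beta_1,\ldots,\beta_p\}\subseteq\Delta\setminus\Delta_0$ of pairwise orthogonal simple roots, the differences $\alpha_i-w_B(\alpha_i)=\sum_j\langle\alpha_i,\beta_j^\vee\rangle\beta_j$ lie in $\mathrm{span}(B)$ and fill it precisely when the $s\times p$ matrix $C(B)$ has full column rank. But the partitioning step has a fatal flaw that is not merely a ``packing'' issue: a simple root $\beta\in\Delta\setminus\Delta_0$ that is not adjacent in the Dynkin diagram to \emph{any} root of $\Delta_0$ satisfies $\langle\alpha_i,\beta^\vee\rangle=0$ for every $\alpha_i\in\Delta_0$, so the column of $C(B)$ indexed by $\beta$ is identically zero. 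Such a $\beta$ can therefore never belong to a block with full column rank, and your construction can never reach the direction $\RR\beta$. Concretely, take $\Phi$ of type $\mathsf A_5$ with simple roots $\alpha_1,\ldots,\alpha_5$ along the chain and $\Delta_0=\{\alpha_1,\alpha_3\}$ (so $s=2$); then $\alpha_5$ is orthogonal to both $\alpha_1$ and $\alpha_3$, and no admissible block contains it. Worse, the elements $w_B$ you allow are products of commuting simple reflections supported on $\Delta\setminus\Delta_0$, which move each $\alpha_i$ only by vectors in $\mathrm{span}(B)$ with $B$ adjacent to $\alpha_i$; this is an intrinsically local operation, so iterating it still cannot escape the $1$-neighborhood of $\Delta_0$ in the diagram. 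The lemma genuinely requires Weyl elements that \emph{translate} $\Delta_0$ to remote parts of the diagram, not local perturbations of $\Delta_0$.

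This is exactly the point where the paper's argument diverges from yours. There one works inside an $\mathsf A$-type subsystem (for $\mathsf A_r$ directly, for $\mathsf B_r,\mathsf C_r,\mathsf D_r,\mathsf E_r$ with $r\geq5$ inside an $\mathsf A_{r-1}$ subsystem), where the Weyl group is a symmetric group acting by permutations. The $s$ pairwise commuting transpositions corresponding to $\Delta_0$ are conjugated to consecutive blocks $(k,k+1),(k+2,k+3),\ldots$, so that roughly $2\lceil r/(2s)\rceil$ translates of $\Delta_0$ sweep across the entire chain; one extra translate then picks up the direction transverse to the $\mathsf A_{r-1}$-subsystem. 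The low-rank cases $2\leq r\leq4$ are disposed of by the trivial bound $t\leq r$ from irreducibility. If you want to complete your argument, you must introduce genuinely non-local $w\in W$ (as in the paper's symmetric-group conjugation) rather than products of reflections in $\Delta\setminus\Delta_0$, or else abandon the fixed incidence to $\Delta_0$ and argue inductively on the part of $V$ already spanned -- but this second route would still require moving $\Delta_0$ to interact with far-away simple roots, so it does not avoid the difficulty.
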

Again we postpone the proof to \ref{WhatRemains}, and state first the main result. 
\begin{Thm}[Nikolov--Segal]\label{KThm}
Let $G$ be a compact simply connected almost simple Lie group
of rank $r$. Suppose that $h\in G$ is an element with $\hat\sigma(h)\geq\theta>0$.
Then every element of $G$ can be written as a product of less than
$f(\theta)=2(\frac{8}{\theta}+3)(\frac{2\pi}{\theta}+1)$ conjugates of $h$ and $h^{-1}$.
\end{Thm}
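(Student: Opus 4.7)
The plan is to follow the strategy outlined in the paragraph preceding the theorem: first reduce the problem to the maximal torus $T$ using the class function $\hat\sigma$, then glue local information from Key Lemma \ref{KLem1} together using the combinatorial Lemma \ref{RepLemma}, and finally extend from $T$ to $G$ by conjugation.

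First I would exploit the fact that the minimum number of conjugates of $h$ and $h^{-1}$ needed to represent a given element of $G$ depends on $h$ only through its $G$-conjugacy class. Every element of $G$ is conjugate into $T$, and similarly we may replace $h$ by any $G$-conjugate of itself. Since $\hat\sigma$ is a continuous class function on $G$, and its restriction to $T$ equals $\max_{w\in W}\sigma(w(\cdot))$, the hypothesis $\hat\sigma(h)\geq\theta$ allows us to assume, after conjugation, that $h\in T$ and $\sigma(h)\geq\theta$.

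Next, I apply Key Lemma \ref{KLem1} to produce a subset $\Delta_0\subseteq\Delta$ of pairwise orthogonal simple roots, with $s=\#\Delta_0\geq r\theta/4$, such that every element of the subtorus $T_{\Delta_0}$ is a product of at most $2\lceil 2\pi/\theta\rceil$ conjugates of $h$ and $h^{-1}$. I then apply Lemma \ref{RepLemma} to the dual root system $\Phi^\vee$ (which has the same Weyl group $W$ and rank $r$) with simple roots $\Delta^\vee=\{\delta^\vee\mid\delta\in\Delta\}$ and the orthogonal subset $\Delta_0^\vee$. Orthogonality is preserved because $\alpha\mapsto\alpha^\vee$ is a positive rescaling under the $W$-invariant inner product identification $V\cong V^*$. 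This yields elements $w_1,\ldots,w_t\in W$ with $t\leq 2r/s+3\leq 8/\theta+3$ such that $\bigcup_{i=1}^t w_i(\Delta_0^\vee)$ spans $V^*\cong\mathrm{Lie}(T)$.

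Each $w_i$ is realized by some $n_i\in N=\mathrm{Nor}_G(T)$, and conjugation by $n_i$ sends $T_{\Delta_0}$ onto $T_{w_i(\Delta_0)}$, whose Lie algebra is spanned by $w_i(\Delta_0^\vee)$. Since these Lie algebras together span $\mathrm{Lie}(T)$ and $T$ is a connected abelian Lie group, the exponential map gives $T=T_{w_1(\Delta_0)}\cdots T_{w_t(\Delta_0)}$. For any $a\in T_{w_i(\Delta_0)}$, the element $n_i^{-1}an_i\in T_{\Delta_0}$ is a product of at most $2\lceil 2\pi/\theta\rceil$ conjugates of $h,h^{-1}$; conjugating this expression by $n_i$ writes $a$ as a product of the same length in conjugates of $h,h^{-1}$. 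Hence, using $\lceil r\rceil<r+1$, every element of $T$ is a product of at most $t\cdot 2\lceil 2\pi/\theta\rceil<2(8/\theta+3)(2\pi/\theta+1)=f(\theta)$ conjugates of $h$ and $h^{-1}$, and the same bound transfers to all of $G$ since each $g\in G$ is $G$-conjugate to an element of $T$. The only subtle point is the root/coroot duality step: Lemma \ref{RepLemma} is stated in terms of spanning by roots in $V$, but what is needed here is a spanning property of coroots in $\mathrm{Lie}(T)$, which is precisely why the lemma must be invoked for the dual root system $\Phi^\vee$ rather than for $\Phi$ itself.
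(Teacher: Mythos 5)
Your proof is correct and follows essentially the same strategy as the paper: conjugate $h$ into $T$ so that $\sigma(h)=\hat\sigma(h)$, invoke the Key Lemma to generate $T_{\Delta_0}$ cheaply, use Lemma~\ref{RepLemma} together with Weyl group conjugation to cover all of $T$, and then finish by conjugacy. Your explicit handling of the root/coroot duality (invoking Lemma~\ref{RepLemma} for $\Phi^\vee$ and noting that a $W$-invariant inner product identifies roots with positive multiples of coroots) makes precise a point the paper leaves implicit in its assertion that the product map $T_{\Delta_0}^{\,t}\to T$ has surjective derivative, but it is the same argument.
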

\begin{proof}
Let $T\subseteq G$ be a maximal torus.
Every element in $G$ is conjugate to an element in $T$, see \cite[Thm.~4.21]{Adams}.
Since $\hat\sigma$ is a class function, we may assume that $h\in T$. 
After conjugating $h$ further by an element in the Weyl group $W=\mathrm{Nor}_G(T)/T$, we can assume that
$\sigma(h)=\hat\sigma(h)$.

We use the notation set up in \ref{Roots1}.
By the Key Lemma~\ref{KLem1}, there is a nonempty set $\Delta_0$ of $s$ pairwise orthogonal simple roots,
with $s\geq \frac{r\theta}{4}$. Moreover, we can generate the torus $T_{\Delta_0}$ in less than
$2(\frac{2\pi}{\theta}+1)$ steps from conjugates of $h$ and $h^{-1}$.
Let $w_1,\ldots,w_t$ be as in Lemma~\ref{RepLemma}. Then the $t$-fold product multiplication map 
$T_{\Delta_0}\times\cdots\times T_{\Delta_0}\longrightarrow T$
which maps $(a_1,\ldots,a_t)$ to $w_1(a_1)w_2(a_2)\cdots w_t(a_t)$ is a surjective homomorphism,
because it is a homomorphism of connected Lie groups whose derivative at the identity is surjective.
Hence every element in $T$ is a product of not more than $2t(\frac{2\pi}{\theta}+1)$ conjugates of
$h$ and $h^{-1}$. 
Moreover,
$t\leq \frac{2r}{s}+3$ and $\frac{1}{s}\leq\frac{4}{r\theta}$, whence $t\leq \frac{8}{\theta}+3$.
Since every element in $G$ is conjugate to an element in $T$, the claim follows.
\end{proof}
\subsection{A geometric interpretation}
The adjoint action of $G$ on itself is a prototype of a \emph{polar action}, as studied
in Riemannian geometry. The conjugacy classes in $G$ near the identity element
are the flag manifolds
for the spherical Tits building $\Delta$
associated to the complexification $G_{\mathbb C}$ of $G$.
The question that is addressed by Theorem~\ref{KThm} can be re-stated as follows.
Let $X=G/\mathrm{Cen}(G)$. Endowed with a bi-invariant Riemannian metric, this is a symmetric
space of compact type.
\begin{quote}
\emph{Let $\kappa\subseteq X$ be a nontrivial geodesic segment.
Show that every element $g\in X$ can be joined to the identity $1\in X$ by
a piecewise geodesic path, whose pieces are conjugates of $\kappa$ under the isometric action of
$G\times G$ on $X$. Bound the number of pieces needed in terms of $\kappa$.} 
\end{quote}
It would be interesting to know if there is a 'Riemannian proof' of 
Theorem~\ref{KThm}, starting from this viewpoint
of broken geodesics, using Morse Theory and polar foliations.

\medskip
In any case we now combine Theorem~\ref{KThm}, Proposition~\ref{UltraProductProp} and 
\L os' Theorem~\ref{Los}
in order to prove Theorem~\ref{N-S1}.
We need some elementary facts from the model theory of ultraproducts. Any old-fashioned
logic text
book such as \cite{BS} and \cite{CK} will do as a reference. 
See also \cite[pp.~5--7]{KW} for a very brief introduction aimed at metric geometers.
\begin{Thm}[\L os' Theorem]\label{Los}
Let $(\mathcal M_i)_{i\in I}$ be a family of $\mathcal L$-structures, for some first order language
$\mathcal L$. Let $\mu$ be a free ultrafilter on $I$ and let $^*\mathcal M$ denote the 
ultraproduct with respect to $\mu$. Let $\psi$ be a sentence in $\mathcal L$.
Then $\psi$ is true in $^*\mathcal M$ if and only if the set 
$I_\psi =\{i\in I\mid\psi\text{ is true in }\mathcal M_i\}$ is contained in $\mu$.
\end{Thm}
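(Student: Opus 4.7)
The plan is to prove, by induction on formula complexity, the stronger statement that for every $\mathcal L$-formula $\phi(x_1,\ldots,x_n)$ and every choice of classes $[a^{(1)}],\ldots,[a^{(n)}]\in{}^*\mathcal M$, the ultraproduct satisfies $\phi([a^{(1)}],\ldots,[a^{(n)}])$ if and only if the ``truth set''
\[
I_\phi(\bar a)=\{i\in I\mid \mathcal M_i\models\phi(a^{(1)}_i,\ldots,a^{(n)}_i)\}
\]
lies in $\mu$. The theorem as stated is the special case of sentences, and this strengthening to formulas with parameters is what makes the induction go through quantifiers.

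First I would set up the ultraproduct precisely: its underlying universe is the quotient of $\prod_{i\in I}M_i$ by the equivalence relation $(a_i)\sim_\mu(b_i)$ iff $\{i\mid a_i=b_i\}\in\mu$, and the function and relation symbols of $\mathcal L$ are interpreted componentwise. Checking that these interpretations descend to the quotient uses only that $\mu$ is closed under supersets and finite intersections. With these definitions in hand, the base case (atomic formulas) is immediate, and the conjunction case follows from $A,B\in\mu\Rightarrow A\cap B\in\mu$; disjunction and implication then reduce to conjunction and negation via the usual Boolean identities.

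The quantifier case splits as follows. For $\exists x\,\phi(x,\bar a)$, one direction is immediate by induction: a witness class $[b]\in{}^*\mathcal M$ yields componentwise witnesses on a set in $\mu$. For the converse, I would use the axiom of choice to pick, for each $i$ in the $\mu$-large set where $\mathcal M_i\models\exists x\,\phi(x,\bar a_i)$, some witness $b_i\in M_i$ (and set $b_i$ arbitrarily elsewhere); then $[b]$ witnesses the existential in ${}^*\mathcal M$ by the induction hypothesis. The universal case is dual, handled via $\forall x\equiv\neg\exists x\neg$.

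The main obstacle — the one step that genuinely forces the ``ultra'' hypothesis on $\mu$ — is the negation case, where one must show that $I_{\neg\phi}(\bar a)\in\mu$ iff $I_\phi(\bar a)\notin\mu$. This is exactly the defining prime property of an ultrafilter: for every $J\subseteq I$, precisely one of $J$ and $I\setminus J$ belongs to $\mu$. Every other step of the argument goes through for a general filter; it is solely at this point that ``ultra'' is indispensable. Note that the hypothesis that $\mu$ is \emph{free} plays no role in the statement or proof — it only matters when one wants to distinguish the ultraproduct from a single factor $\mathcal M_j$, as in our applications via Proposition~\ref{UltraProductProp}.
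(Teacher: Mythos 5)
The paper does not include a proof of \L os' Theorem; it states the result and defers to Bell--Slomson \cite[Ch.~5, Thm.~2.1]{BS}. Your argument is precisely the standard induction on formula complexity found there: you correctly strengthen to formulas with parameters so the quantifier step goes through, invoke the axiom of choice in the existential case, and isolate the negation step as the one place where the prime (ultra) property of $\mu$ is indispensable; the proof is correct.
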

The proof of \L os' Theorem is not difficult and well explained e.g.~in Bell--Slomson \cite[Ch.~5, Thm.~2.1]{BS}.
\begin{Prop}
Let $(G_i)_{i\in I}$ be a family of compact simply connected almost simple Lie groups.
Let $N\trianglelefteq\prod_{i\in I}G_i$ be an abstract normal subgroup of countable index. 
Then $N=\prod_{i\in I}G_i$.
\end{Prop}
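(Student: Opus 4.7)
The plan is to apply Proposition~\ref{UltraProductProp}, reducing the statement to a dichotomy that I then handle separately. First I would verify its hypothesis, namely that $P:=\prod_{i\in I}G_i$ is perfect. Each $G_i$ is a compact connected semisimple Lie group, hence perfect, and by the classical theorem of Got\^o every element is a single commutator, so $c(G_i)=1$ uniformly in $i$. Lemma~\ref{PerfectLemma} then yields perfectness of $P$. Assuming, for contradiction, that $N\trianglelefteq P$ has countable index with $N\neq P$, Proposition~\ref{UltraProductProp} produces one of two cases.

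\emph{Case (a): $pr_j(N)\neq G_j$ for some $j\in I$.} The corollary to Theorem~\ref{NormalInSemisimple} forces $pr_j(N)\subseteq\mathrm{Cen}(G_j)$, which is finite. But $G_j/pr_j(N)\cong P/(N\cdot\ker pr_j)$ is a quotient of the countable group $P/N$, hence itself countable, while it surjects onto the uncountable simple group $G_j/\mathrm{Cen}(G_j)$---a contradiction.

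\emph{Case (b): $M:=pr_\mu(N)\neq {^*G}$ for some free ultrafilter $\mu$ on $I$,} where ${^*G}:=\prod_{i\in I}G_i/\mu$ is the ultraproduct. Consider the ultralimit ${^*\hat\sigma}:{^*G}\to{^*[0,\pi]}$ of the class functions $\hat\sigma_i$. For each $k\in\NN$, Theorem~\ref{KThm} becomes a first-order sentence in the language of groups augmented with the unary predicate $\Sigma_k(h):\Leftrightarrow\hat\sigma(h)\geq 1/k$: namely, the sentence asserting that every $h$ satisfying $\Sigma_k(h)$ allows one to write every group element as a product of at most $\lceil f(1/k)\rceil$ conjugates of $h^{\pm1}$. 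By \L os' Theorem~\ref{Los} this family of sentences holds in ${^*G}$. Consequently no proper normal subgroup of ${^*G}$ can contain an $h$ with ${^*\hat\sigma(h)}\geq 1/k$ for any standard $k\in\NN$; equivalently, every $m\in M$ satisfies $\mathrm{st}({^*\hat\sigma(m)})=0$.

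To finish I would show that ${^*G}/M$ is uncountable, contradicting its countability as a quotient of $P/N$. The triangle inequality and symmetry of $\hat\sigma$ from Lemma~\ref{PropertiesOfsigma}(ii),(iii) transfer to ${^*G}$, so whenever $g_1g_2^{-1}\in M$ the difference $|{^*\hat\sigma(g_1)}-{^*\hat\sigma(g_2)}|$ is infinitesimal; thus $gM\mapsto\mathrm{st}({^*\hat\sigma(g)})$ is a well-defined map ${^*G}/M\to[0,\pi]$. Because $\hat\sigma:G_i\to[0,\pi]$ is surjective for every $i$ (Lemma~\ref{PropertiesOfsigma}(iv)), realizing any prescribed $\theta\in[0,\pi]$ by a constant sequence makes this map surjective, giving $|{^*G}/M|\geq 2^{\aleph_0}$ and completing the contradiction. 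The main obstacle is honestly framing Theorem~\ref{KThm} as a schema of first-order sentences across the enriched structures $(G_i,\hat\sigma_i)$; once the predicates $\Sigma_k$ are in place, \L os' Theorem and the transferred triangle inequality combine cleanly to finish the proof.
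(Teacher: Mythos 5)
Your overall strategy coincides with the paper's: rule out a bad projection, apply Proposition~\ref{UltraProductProp} to get a free ultrafilter $\mu$ with $M:=pr_\mu(N)\neq{}^*G$, transfer the quantitative normal-generation statement of Theorem~\ref{KThm} by \L os, conclude that $M$ sits inside the set of elements with infinitesimal ${}^*\hat\sigma$-value, and produce uncountably many cosets via the standard-part map. The framing as a schema of first-order sentences indexed by $k$ is exactly how the paper does it, and your Case~(a) is the paper's preliminary step.

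There is, however, one genuine gap. To make $gM\mapsto\mathrm{st}({}^*\hat\sigma(g))$ well defined on \emph{all} of ${}^*G/M$, you invoke the triangle inequality $\hat\sigma(gh)\leq\hat\sigma(g)+\hat\sigma(h)$ for general $g,h\in G$. But Lemma~\ref{PropertiesOfsigma} only establishes properties (ii)--(iii) for $\hat\sigma$ as a function on the maximal torus $T$; the unique class-function extension to $G$ is not shown to be subadditive, and there is no reason to expect that subadditivity survives conjugating the two factors and the product back into $T$ separately. (For $\mathrm{SU}(2)$ the extension happens to be, up to scaling, distance to the center and hence subadditive, but this is special to rank one.) This is precisely why the paper restricts attention to ${}^*T$: it defines $A\subseteq{}^*T$ as the subgroup of infinitesimal-$\hat\sigma$ elements, shows ${}^*T/A$ is uncountable via the standard-part map, and then observes that ${}^*T/({}^*T\cap K_\mu)$ surjects onto ${}^*T/A$ while it injects into ${}^*G/K_\mu$. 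Your proof is repaired the same way: replace "${}^*G/M\to[0,\pi]$" with the map defined on ${}^*T/({}^*T\cap M)$, where Lemma~\ref{PropertiesOfsigma}(ii)--(iv) applies literally after transfer, and then note that this quotient embeds as a subgroup of ${}^*G/M$. With that fix your argument is the paper's.
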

\begin{proof}
Assume to the contrary that $N\neq \prod_{i\in I}G_i$ is of countable index. 
If $pr_j(N)=K_j\neq G_j$ for some index $j\in I$,
then $K_j\subseteq G_j$ is contained in the center of $G_j$ by \ref{NormalInSemisimple} 
and hence  $G/N$ maps onto the uncountable group $G_j/K_j$, which is impossible. 
Thus $pr_j(N)=G_j$ holds for all $j\in I$. In a compact semisimple Lie group, every
element is a commutator by Goto's Theorem, 
see~\cite[Cor.~6.56]{HMCompact}. Thus $\prod_{i\in I}G_i$ is perfect and we may apply
Proposition~\ref{UltraProductProp}. It follows that there exists a free ultrafilter $\mu$
on $I$ such that \[pr_\mu(N)=K_\mu\neq \prod_{i\in I}G_i\big/\mu.\]
Let $f(\theta)=2(\frac{8}{\theta}+3)(\frac{2\pi}{\theta}+1)$.

We put $^*G=\prod_{i\in I}G_i\big/\mu$ for short. 
We choose for every $i\in I$ a maximal torus $T_i\subseteq G_i$, a set of simple roots, and
we define $\hat\sigma_i:G_i\longrightarrow[0,\pi]$ as in \ref{Roots1}.
In the ultraproduct we obtain an abelian subgroup $^*T=\prod_{i\in I}T_i\big/\mu\subseteq{}^*G$.
The ultraproduct of the $\hat\sigma_i$ is denoted by $^*\hat\sigma$. This function takes its
values in the nonstandard reals $^*\mathbb{R}$ (the ultraproduct of $\mathbb R$ with respect to $\mu$,
a real closed non-archimedean ordered field). 

Let $m$ be a positive integer. Then by Theorem~\ref{KThm},
the following first-order sentence is true in each $G_i$.
\footnote{The relevant first-order language contains predicates for elements of groups $G$, $T$, of a field $R$, 
function symbols
for maps $\hat\sigma$, $f$, function symbols $\cdot,+,\times$ for the group multiplication, the addition and 
multiplication in a field, and a relation symbol $<$ for an ordering on the field.
The number $m$ is written out as $1+1+\cdots+1$.}
\begin{quote}\em
If $g\in G$ with $\hat\sigma_i(g)\geq\frac{1}{m}$, then every element in $G_i$ is a product of at most
$f(\frac{1}{m})$ conjugates of $g$.
\end{quote}
By \L os' Theorem~\ref{Los}, the same sentence is true in the ultraproduct $^*G$. Also, every element in $^*G$ is conjugate to
an element in $^*T$ (either by \L os' Theorem, or directly, because this is true in each $G_i$ by \cite[Thm.~4.21]{Adams}).
Therefore $^*T\cap K_\mu$ consists necessarily of elements whose $\hat\sigma$-value is infinitesimally small.
Let $A\subseteq {}^*T$ denote the collection of all elements in $^*T$ whose
$^*\hat\sigma$-value is infinitesimally small. By \L os'Theorem and and the properties
of $\hat\sigma$ stated in Lemma~\ref{PropertiesOfsigma}, 
$A\subseteq{}^*T$ is a subgroup. Moreover, $|^*\hat\sigma(a)-{}^*\hat\sigma(b)|$ is
infinitesimally small if $a,b\in{}^*T$ are elements with 
$ab^{-1}\in A$. Consider the standard part
function $\mathrm{std}:{}^*\mathbb R_{\mathrm{fin}}\longrightarrow\mathbb R$
which assigns to every finite nonstandard real its real part.
The map $^*\hat\sigma$ surjects $^*T$ onto the set 
$Q=\{x\in{}^*\mathbb R\mid 0\leq x\leq\pi\}$
(again by Lemma~\ref{PropertiesOfsigma} and \L os' Theorem). Then $\mathrm{std}$ surjects
$Q$ onto $[0,\pi]$ and we have a commuting diagram
\[
\begin{tikzcd}
{}^*T \arrow{r}{{}^*\hat\sigma} \arrow{d} 
& Q \arrow{d}{\mathrm{std}} \\
{}^*T/A \arrow{r} & {[0,\pi]}.
\end{tikzcd}
\]
Therefore the quotient $^*T/A$ is uncountable.
Now $^*T/{}^*T\cap K_\mu$ injects into $^*G/K_\mu$ and 
surjects onto $^*T/A$. It follows that $^*G/K_\mu$ is uncountable. Hence
$G/N$ is also uncountable, a contradiction.
\end{proof}
\begin{proof}[Proof of Theorem~\ref{N-S1}]
Let $\phi$ denote the projection $G\longrightarrow G/N$ and put $\Gamma=G/N$
and $\Gamma_0=\phi(G^\circ)$. Then $\Gamma_0$ has finite index in $\Gamma$.

By the Approximation Theorem for compact connected groups,
there exists a compact connected abelian group
$Z$, a family $(G_i)_{i\in I}$ of simply connected compact almost simple Lie groups and a
continuous central surjective homomorphism $\psi:Z\times\prod_{i\in I}G_i\longrightarrow G^\circ$ with totally
disconnected kernel, see~\cite[Thm.~9.24]{HMCompact}. 
By Theorem~\ref{KThm}, $\phi\circ\psi$ annihilates $\prod_{i\in I}G_i$,
because $\Gamma_0$ is countable.
Since $\prod_{i\in I}G_i$ is the derived group of $Z\times\prod_{i\in I}G_i$,
the group $\Gamma_0$ is abelian, and thus $\Gamma$ is virtually abelian.

Suppose now that $\Gamma$ is residually finite.
Then $\Gamma_0$ is also residually finite.
On the other hand, $Z$ is connected and therefore divisible.
It follows from \ref{FirstFacts} that $\phi$ is constant on $Z$ and hence constant on $G^\circ$.
Thus $\phi$ is continuous and open, and $\Gamma$ is finite.
\end{proof}

\subsection{The remaining combinatorial proofs}\label{WhatRemains}
\begin{proof}[Proof of the claim made in the proof of the Key Lemma~\ref{KLem1}]
Let $\Gamma$ denote the underlying graph of the Dynkin diagram of $\Phi$. The vertices of $\Gamma$
are the simple roots in $\Delta$. This graph is a tree
and admits therefore a coloring, using red and blue, such that no two adjacent nodes have the same color.
Simple roots which are not joined by an edge are orthogonal. Thus both the red and the blue roots
form sets of pairwise orthogonal roots.
In particular we have partitioned $\Delta_1$ into two sets of pairwise orthogonal roots.
One of these sets has at least $\frac{\#\Delta_1}{2}$ elements.
\end{proof}
\begin{proof}[Proof of Lemma~\ref{RepLemma}]
Given a set $\Delta_0$ consisting of $s\geq 1$ pairwise orthogonal simple roots, 
let $t$ be the minimum number of elements $w_1,\ldots,w_t\in W$ with the property
that $w_1(\Delta_0)\cup\cdots\cup w_t(\Delta_0)$ generates $V$. The action of $W$ on $V$ is
irreducible (because $\Phi$ is irreducible). Therefore the $W$-orbit of every root $\alpha$
generates $V$. Given $\alpha$, there exist thus $r$ elements $w_1,\ldots ,w_r\in W$ such that
$w_1(\alpha),\ldots,w_r(\alpha)$ is a basis for $V$. This shows that we have always
\[
 t\leq r,
\]
and that $t=r$ if $s=1$. Now we consider the different types of root systems of
rank $r\geq 2$, and we assume that $s>1$.

For type $\mathsf A_r$ the Weyl group $W$ is the symmetric group on the set $\{1,\ldots,r+1\}$.
The reflections determined by the roots are the transpositions $(i,j)$ with $i<j$.
The reflections corresponding to the simple roots $\delta_1,\ldots,\delta_r$ are the transpositions
$(i,i+1)$, for $1\leq i\leq r$. 
The reflections corresponding to the roots in $\Delta_0$ are then pairwise commuting transpositions.
Given $1\leq k\leq r-2s-1$, there exists therefore
a permutation $w\in W$ which conjugates these $s$ commuting transpositions to
$(k,k+1),(k+2,k+3),\ldots,(k+2s-2,k+2s-1)$.
Thus $w$ maps $\Delta_0$ to the set of roots
$\eps_1\delta_{k},\eps_2\delta_{k+2},\ldots,\eps_s\delta_{k+2s-2}\subseteq\Phi$,
with $\eps_i=\pm1$. Since $V$ is spanned by $\delta_1,\ldots,\delta_r$, we may simply count 
how many subdiagrams of type $\mathsf A_{2s}$ are needed to cover the whole $\mathsf A_r$-diagram.
We obtain thus an upper bound
\[\textstyle
 t\leq 2\lceil\frac{r}{2s}\rceil<\frac{r}{s}+2<\frac{2r}{s}+3.
\]
For the types ${\mathsf B}_r,{\mathsf C}_r,{\mathsf D}_r.{\mathsf E}_r$ and $r\geq 5$ we note that there is a root subsystem
$\Phi'\subseteq\Phi$ of type $\mathsf A_{r-1}$, such that $\Delta_0$ intersects this
subsystem in a set of size at least $s-1>0$. The $r-1$-dimensional subspace $V'$
generated by $\Phi'$ is thus contained in a subspace generated by at most 
$2\lceil\frac{r-1}{2(s-1)}\rceil$ $W$-translates of $\Delta_0$. Since $V'$ is
not invariant under $W$, we can generate $V$ if we add one more $W$-translate of $\Delta_0$,
i.e.
\[\textstyle
 t\leq 2\lceil\frac{r-1}{2s-2}\rceil+1<\frac{r-1}{s-1}+3\leq\frac{2r}{s}+3.
\]
One checks directly that for $2\leq r\leq 4$ and $2\leq s<r$ one has $r\leq \frac{2r}{s}+3$,
whence $t\leq\frac{2r}{s}+3$.
\end{proof}

\subsection*{Acknowledgement}
The author thanks Martin Bridson, Arjeh Cohen, Theo Grundh\"ofer, Karl Hofmann, Karl-Hermann Neeb, and the anonymous referee for helpful remarks and clarifications.

\end{document}